\documentclass[final]{siamltex}
\usepackage{graphicx,amsmath,amsbsy,amssymb,color}



\def \z {\bf z}

\def\N{{\mathcal N}}

%



\newtheorem{lmm}{Lemma}[section]
\newtheorem{thm}{Theorem}[section]
\newtheorem{ex}{Example}[section]

\setlength{\textwidth}{17cm} 
\setlength{\textheight}{21cm}


\graphicspath{{fig/}}

\newtheorem{remark}{Remark}[section]

\title{A $C^0$ Linear Finite Element Method for a Second Order Elliptic Equation in Non-Divergence Form with Cordes Coefficients}

\author{Minqiang Xu\thanks{College of Science, Zhejiang University of Technology, Hangzhou, 310023, P.R. China, and School of Data and Computer Science, Sun Yat-sen University, Guangzhou 510275, P.R. China. Email: xumq9@mail2.sysu.edu.cn. The research of this author was supported in part by NSFC Grant 12271049, the General
Scientific Research Projects of Zhejiang Education Department (Y202147013) and the Opening Project of Guangdong Province Key Laboratory of Computational Science at the Sun Yat-sen University(2021008).}
\and Runchang Lin\thanks{Department of Mathematics and Physics, Texas A\&M International University, Laredo, Texas 78041, USA. Email: rlin@tamiu.edu.}
\and Qingsong Zou\thanks{Corresponding author.
School of Computer Science and Engineering, and Guangdong Province Key
Laboratory of Computational Science, Sun Yat-sen University, Guangzhou 510275, China.
Email: mcszqs@mail.sysu.edu.cn. The research of this author was supported in part by NSFC Grant 12071496,
Guangdong Provincial NSF Grant 2017B030311001, and Guangdong Province Key Laboratory of Computational
Science at the Sun Yat-sen University(2020B1212060032).}}
\begin{document}

\maketitle

%
%
\medskip

\begin{abstract}
In this paper, we develop a gradient recovery based linear (GRBL) finite element method (FEM) and a Hessian recovery based linear (HRBL) FEM for second order elliptic equations in non-divergence form. The elliptic equation is casted into a symmetric non-divergence weak formulation, in which second order derivatives of the unknown function are involved. We use gradient and Hessian recovery operators to calculate the second order derivatives of linear finite element approximations. Although, thanks to low degrees of freedom (DOF) of linear elements, the implementation of the proposed schemes is easy and straightforward, the performances of the methods are competitive. The unique solvability and the $H^2$ seminorm error estimate of the GRBL scheme are rigorously proved. Optimal error estimates in both the $L^2$ norm and the $H^1$ seminorm have been proved when the coefficient is diagonal, which have been confirmed by numerical experiments. Superconvergence in errors has also been observed. Moreover, our methods can handle computational domains with curved boundaries without loss of accuracy from approximation of boundaries. Finally, the proposed numerical methods have been successfully applied to solve fully nonlinear Monge-Amp\`{e}re equations.
\vskip .7cm
{\bf AMS subject classifications.} \ {Primary 65N30; Secondary 45N08}

\vskip .3cm

{\bf Key words.} \ {non-divergence form; discontinuous coefficients; Cordes condition; gradient recovery; Hessian recovery; linear finite element; Monge-Amp\`{e}re equations; superconvergence.}
\end{abstract}


\section{Introduction}
In this paper, we develop and analyze a $C^0$ linear FEM for the following second-order linear elliptic partial differential equation (PDE) in non-divergence form:
\begin{eqnarray}\label{eqn:para}
\left \{
\begin{array}{lll}
\mathcal{L}u=f \quad &\mbox{in} \quad~\Omega,\\
~~u=0  \quad &\mbox{on} \quad \partial\Omega,
\end{array}
\right.
\end{eqnarray}
where $\Omega \subset \mathbb{R}^d$ is a bounded open convex domain with boundary $\partial \Omega$, $f\in L^2(\Omega)$ is a given function, and the differential operator $\mathcal{L}$ has a non-divergence form
\begin{equation}\label{nondivergence operator}
  \mathcal{L}v=A:D^2v=\sum_{j,k=1}^{d}a_{jk}\partial_{jk}^2v, \quad \forall v\in V:=H_0^1(\Omega)\cap H^2(\Omega).
\end{equation}
Here and in the rest of this paper, standard definitions and notations of Sobolev spaces are used \cite{Adama}. We suppose that the coefficient tensor $A = (a_{ij})_{d\times d}$ is symmetric and uniformly bounded. Assume further that $A$ is positive definite; namely, there exist positive constants $\alpha,\beta$ such that
\begin{equation}\label{posideft}
 \alpha\xi^{T}\xi \leq\xi^{T}A(x)\xi\leq \beta \xi^{T}\xi, \quad \forall~\xi\in \mathbb{R}^d,~x\in\Omega.
\end{equation}
In addition, we assume that the coefficient tensor satisfies the Cordes condition; i.e. there exists an $\epsilon\in[0,1]$ such that
\begin{equation}\label{Cordes}
 |A|^2/(\text{tr} A)^2\leq 1/(d-1+\epsilon),
\end{equation}
where $|A|^2=\sum_{i,j=1}^{d}a_{ij}^2$. It has been proven in \cite{Smears} that the condition \eqref{Cordes} can be derived from the positive definiteness condition \eqref{posideft} for two dimensional problems. But, in three dimensional cases, the Cordes condition \eqref{Cordes} is essential; the PDE may be ill-posed in absence of this condition.

Problem \eqref{eqn:para} arises in many fields, such as stochastic processes and game theory \cite{Fleming}. The non-divergence equations are also frequently found in linearizations of second order fully nonlinear differential equations, such as the Hamilton-Jacobi-Bellman equation and the Monge-Amp\`{e}re equation (cf., e.g., \cite{Brenner,Neilan2}). In many important applications, the coefficients are hardly smooth, or even discontinuous, so that the differential equations cannot be written in divergence forms. On the other hand, compared with studies for elliptic problems in divergence form, the literature on numerical analysis of differential equations in non-divergence form is limited. Therefore, it is crucial to develop efficient numerical methods for the problem \eqref{eqn:para} with rough coefficients to accommodate its wide application.

In this paper, we will consider the problem \eqref{eqn:para} in two dimensional cases. If the coefficient matrix $A\in [C^1(\Omega)]^{2\times2}$, then (1.1) can be recast into the following divergence form:
\begin{equation}\label{divergence form}
  \nabla\cdot(A\nabla u)-(\nabla\cdot A)\cdot \nabla u=f.
\end{equation}
A weak formulation of the problem \eqref{divergence form} is to find $u\in H^1(\Omega)$ such that
\begin{equation}\label{standard formulation}
 -\int_{\Omega} (A\nabla u)\cdot\nabla v-\int_{\Omega}(\nabla\cdot A)\cdot(\nabla u) v=\int_{\Omega} fv, \quad \forall v\in H_0^1(\Omega).
\end{equation}
Therefore, standard Lagrange finite elements can be applied to discrete formulation \eqref{standard formulation}. But for a non-divergence form (1.1), the formulation \eqref{standard formulation} may fail to work. To circumvent this difficulty, many numerical approaches have been developed; see, e.g., \cite{Blechschmidt2019, Brenner2020, Cordes1956, Dedner, Feng, Gallistl, Kawecki20191, Lakkis2019, Lakkisand, Mu, Neilan1, Nochetto, Smears, Smears1, Wang, Zhu2020} and the references therein for an incomplete list of references. Among these methods, we are interested in three approaches.

The first approach is based on an asymmetric form, which reads: finding $u\in H^2(\Omega)$ such that
\begin{equation}\label{asymmetric formulation}
  \int_{\Omega} (A:D^2u) v=\int_{\Omega}fv, \quad \forall v\in H^1_0(\Omega).
\end{equation}
To discretize formulation \eqref{asymmetric formulation}, Wang et al. introduced and analyzed a primal-dual weak Galerkin (WG) method \cite{Wang}, which characterizes the numerical solution as a minimization of a nonnegative quadratic functional with constraints. This method involves second derivatives of test and trial functions, which means polynomials of degree at least two are required for the finite element space. Lakkis et al. provided a nonconforming FEM by introducing finite element Hessian \cite{Lakkisand}.
Stability and convergence of the method were provided in the case of quadratic or higher degree elements.

The second approach involves a fourth-order variational form of the non-divergence equation \eqref{eqn:para}. Smears and S\"{u}li \cite{Smears} designed an $hp$-version discontinuous Galerkin (DG) FEM based on the formulation
\begin{equation}\label{formulation 2}
  (\gamma A:D^2u,\Delta v)_{\Omega}=(\gamma f,\Delta v)_{\Omega}, \quad \forall~v\in H^2(\Omega),
\end{equation}
which was the first contribution to the non-divergence equations \eqref{eqn:para} with Cordes coefficients. The stability of the presented scheme was shown by applying a discrete Miranda-Talenti estimate. Feng et al. \cite{Feng2018} utilized continuous Lagrange finite elements to discrete scheme \eqref{formulation 2} and proved the well-posedness of the proposed scheme using a discrete inf-sup condition under the assumption that coefficients are continuous. Neilan et al. \cite{Feng} proposed and investigated a $C^0$ DG method. They used an interior penalty term from the jump of flux across interior element edges, which can be obtained by applying DG integration by parts formula to the first term of the formulation \eqref{standard formulation}.

The third approach is based on a symmetric form from the least-squares technique, which has widely applications in
scientific computing (see, e.g., \cite{refJiaXu,refXZE3,refXZE4}). It reads: seeking $u\in H^2 (\Omega)$ such that
\begin{equation}\label{least-squares formulation 1}
    (A:D^2u,A:D^2v)_{\Omega}=(f,A:D^2v)_{\Omega}, \quad \forall~v\in H^2(\Omega).
\end{equation}
The formulation \eqref{least-squares formulation 1} can be obtained from minimizing the functional $\|A:D^2u-f\|^2_{0}$, for which  $H^2$ elements are usually required. Gallistl \cite{Gallistl} applied a conforming mixed FEM (MFEM) for the numerical approximation. Adaptive algorithms were also discussed. Ye et al. \cite{Mu} presented a nonconforming FEM with interior penalty term. There are many other approaches for solving this classic problem; e.g. the vanishing moment method \cite{FN2008}, the Alexandroff-Bakelman-Pucci (ABP) method \cite{Nochetto}, the tailored nonconforming FEM \cite{BRW2020}, etc. Recently, Kawecki \cite{Kawecki20191} extended the DG technique to curved domains.


Recently, some differential operator recovery based linear FEMs have been proposed to solve high order partial differential equations (see, e.g., \cite{Guo2017,XuGuoZou}). The main purpose of this work is to design linear FEMs for problems \eqref{eqn:para}. A challenge of applying low degree elements is in the calculation of second order derivatives of the linear finite element approximation. To overcome this difficulty, we adopt a gradient recovery operator $G_h$ (see, e.g., \cite{Ainsworth, Bank1, Bank2}) to lift the discontinuous piecewise constant $\nabla v_h$ to a continuous piecewise linear function $G_hv_h$, such that differentiation $DG_hv_h$ is possible. Alternatively, we can apply Hessian recovery operators $H_h$ (see, e.g., \cite{GuoZhang}) to discretize the second order differential operator $D^2$ directly. Once the recovery operators $DG_h$ or $H_h$ have been constructed, numerical schemes for \eqref{eqn:para} can be designed by applying least-squares weak formulation \eqref{least-squares formulation 1}. We shall remark that the difference operator $DG_h$ is asymmetric in general. Consequently, a direct application of $DG_h$ to \eqref{least-squares formulation 1} may lead to an instable numerical scheme. To ensure stability, the rotation of the recovery gradient may be included as a penalty in the scheme.

Comparing to other techniques for the non-divergence form \eqref{eqn:para}, the proposed methods have two advantages. First, linear elements induce {\color{red}fewer} DOFs in comparison to $C^1$ and/or mixed elements, which hence leads to more convenient implementation and less cost in computation. For example, in Table 1.1, we demonstrate local and global DOFs from different methods for a benchmark problem on a square domain and a uniform mesh with $2N^2$ triangles. In particular, the DG method in \cite{Dedner}, the MFEM in \cite{Gallistl}, the WG method in \cite{Wang}, and the internal penalty FEM (IPFEM) in \cite{Mu} are included in the Table. The total DOFs of the GRBL and HRBL FEMs are both $(N+1)^2$, which are the smallest in these methods.
\begin{table}[ht]
\centering
\caption{Comparison on a uniform triangular mesh}
\label{Tab:01}
\begin{tabular}{lllll}
\hline
~\text{Methods}  &~~~~\quad~~~\text{Elements}                           &\text{Local DOFs}             &~~\quad\text{Global DOFs}      \\
\hline
\quad~~\text{DG}         &~~~~~$P_{1}(T)\times [P_{1}(T)]^2$                &~~~~~~9                              &~~~\quad\quad$18N^2$   \\
\quad\text{MFEM}       &~~~~~$P_{1}(T)\times [P_{1}(T)]^2$                  &~~~~~~9                              &~~~\quad $3(N+1)^2$                   \\
\quad~~\text{WG}       &$P_{2}(T)\times P_{2}(e)\times[P_{1}(e)]^2$         &~~~~~15                             &$2(2N+1)^2+2(N+1)^2$              \\
~~~\text{IPFEM}        &~~~~~~\quad~~$P_{2}(T)$                             &~~~~~~6                              &~~~~\quad$(2N+1)^2$ \\
\text{GRBL/HRBL}      &~~~~~~\quad~~$P_{1}(T)$                             &~~~~~~3                              &~~~~~\quad$(N+1)^2$\\
\hline
\end{tabular}
\end{table}
Second, the recovery operators $G_h$ and $H_h$ can be defined on a general unstructured grid. Thus the numerical algorithm can be applied for problems on domains with arbitrary geometries.
Meanwhile, our proposed numerical schemes have nice convergence properties. Under the assumption of $H^3(\Omega)$ regularity of the exact solution, it is observed that numerical errors measured in $L^2$ and $H^1$ norms converge optimally on unstructured grids, which are of second and first orders, respectively. These convergence rates are competitive to the rates of other methods in the literature. For example, for both the WG method in \cite{Wang} and the internal penalty method in \cite{Mu}, the convergence rates of numerical errors in $L^2$ norm are of second order when quadratic elements are used, which are not optimal. On the other hand, superconvergence phenomena are also observed in our numerical experiments. In particular, numerical errors of the HRBL have a convergence rate of 1.5 when they are measured in the $H^2$ seminorm. The recovered gradient of both schemes converges in second-order. In addition, numerical experiments show that, when inexact approximations of curved boundaries are employed, the proposed methods capture optimal second order convergence rate as well. Even for less smooth solutions with only $H^{2+\tau}(\tau>0)$ regularity, the proposed schemes using linear elements can still achieve same convergence rates as those obtained from the WG method using quadratic elements. Finally, as an application, we have applied the recovery based linear FEM to solve fully nonlinear Monge-Amp\`{e}re equations. A convex solution with optimal convergence rates is obtained.

In this paper, theoretical investigation for the GRBL scheme has been developed. Error estimation in the $H^2$ seminorm converges with linear convergence order has been given under the assumption of $H^3$ regularity of the exact solution and sufficient regularity of the grid. Moreover, in the special case that $A=\alpha I$, we can prove superconvergence of the recovered gradient by applying the Aubin-Nitsche technique. Consequently, an optimal $L^2$ error estimate can be obtained by the discrete Poincar\'{e} inequality.

This paper is organized as follows. In Section 2, notations and some preliminary results on the gradient and Hessian recovery operators are introduced. In Section 3, we first introduce the GRBL and HRBL FEMs for problem $\eqref{eqn:para}$. In section 4, the stability of the GRBL FEM is proven. Moreover, some optimal error estimates, including estimations in $H^2$ seminorm, recovered gradient seminorm, and $L^2$ norm, are established. In Section 5, we introduce an application of the proposed method to fully nonlinear Monge-Amp\`{e}re equations. In Section 6, some typical (including benchmark) numerical experiments are presented to demonstrate the effectiveness of the new numerical methods.

\section{Preliminary results}

In this paper, we use $C$ to denote a generic positive constant independent of data of the PDE and mesh size, which may be different at different occurrences. For convenience, we write $x\lesssim y$ provided $x \leq Cy$ for some constants $C$, and $x\sim y$ if both $x\lesssim y$ and $y\lesssim x$ hold. Standard definitions and notations for Sobolev spaces are used. In particular, $(\cdot,\cdot)$ is the $L^2$-inner product, and $\|\cdot \|_{i}$ and $|\cdot|_{i}$ are the norm and seminorm in $H^{i}(\Omega)$, respectively.

For simplicity of presentation, we focus our attention on the two-dimensional case. Let $\mathcal{T}_{h}$ be a regular triangulation of the domain $\Omega$ with mesh-size $h$. We use $\mathcal{N}_h$ to denote the set of vertices of $\mathcal{T}_{h}$. We denote by $V_h$ the standard $C^0$ linear finite element space associated with $\mathcal{T}_{h}$ and define $V_h^0=\{v_h\in V_h : v_h|_{\partial \Omega}=0\}$. For each vertex $\z$ in the triangulation, we define the element patch and control volume of $\z$ as $\omega_{\z}=\cup\{\tau \in \mathcal{T}_{h}: \z\in\bar{\tau}\}$ and $V_{\z}$ (see Figure 2.1).

\begin{figure}[!h]
    \centering
    {\includegraphics[width=0.35\textwidth]{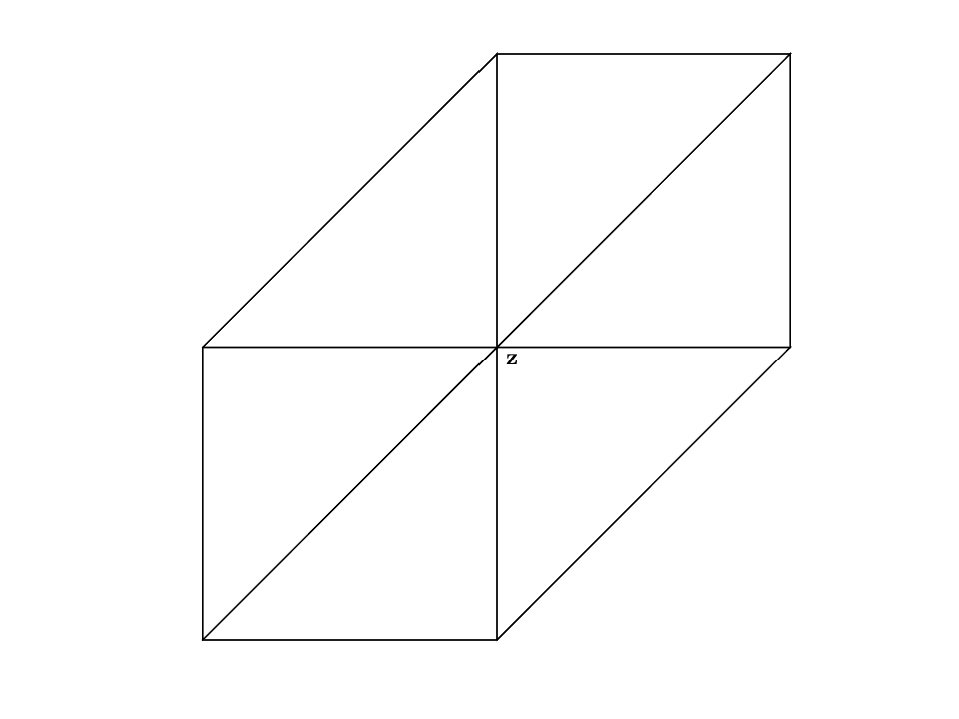}}
    {\includegraphics[width=0.35\textwidth]{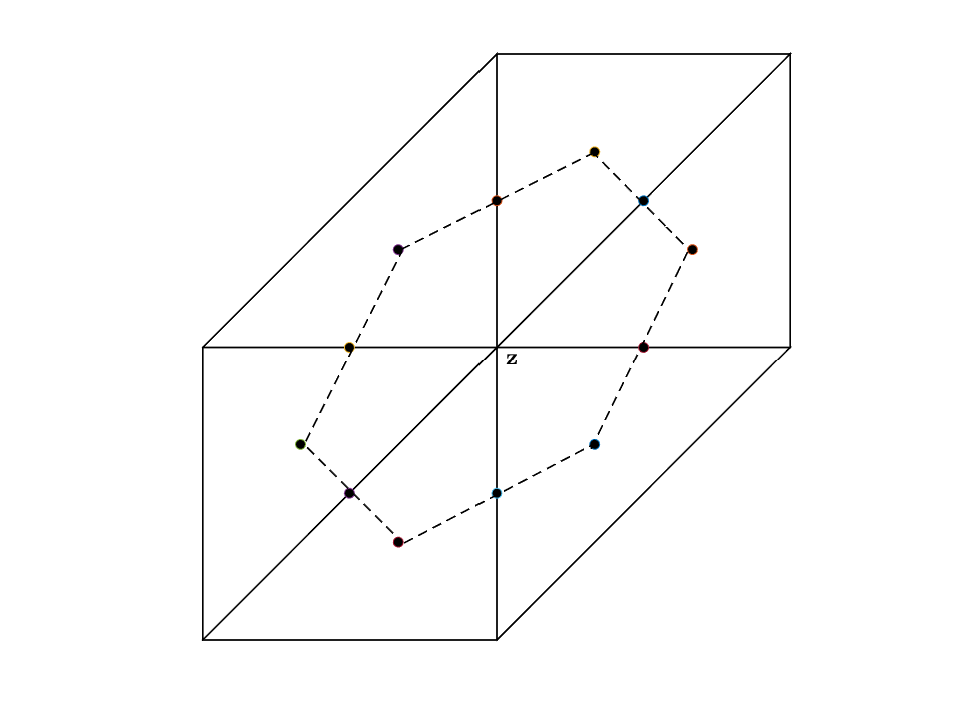}}
    \caption{(a) local patch $\omega_{\z}$, and (b) control volume $V_{\z}$ on uniform mesh.}
    \label{fig:dualandpatch}
\end{figure}

\subsection{Calculation of first derivatives}
In this subsection, we introduce a reconstructed gradient $G_hv_h$ which is an improvement of a piecewise constant function $\nabla v_h$.  We shall define a gradient reconstruction operator $G_h:V_h\rightarrow V_h\times V_h$. We first determine the value of $G_h v_h$ at all vertices, and then obtain the gradient function by interpolation over the whole domain, namely,
\begin{equation*}
    G_hv_h (x,y) =\sum _{\z\in\mathcal{N}_h} G_hv_h({\z})\phi_{\z} (x,y),
\end{equation*}
where $\phi_{\z}$ is the linear nodal shape function of vertex $\z$. There are three popular ways to determine $G_hv_h({\z})$ at a vertex ${\z} \in\mathcal{N}_h$, which are specified in below.

(a) Weighted average: define
\begin{equation}\label{2.3}
  G_hv_h({\z})=\frac{1}{|\omega_{\z}|}\int_{\omega_{\z}}\nabla v_h \; dxdy.
\end{equation}

(b) Recovery techniques: using a local discrete least-squares fitting operator to smooth the gradient. The ZZ approach proposed by Zienkiewicz and Zhu \cite{Zienkiewicz}
and polynomial preserving recovery (PPR) proposed by Naga and Zhang \cite{Zhang} are frequently used operators in post-processing technology. Specifically, they are defined as follows.

ZZ: seeking two linear polynomials $p_l\in \mathcal{P}_1(\omega_{\z})$ satisfying
\begin{equation}
  \sum_{i=1}^{m}[p_l(x_i,y_i)-\partial_lv_h(x_i,y_i)]q(x_i,y_i)=0, \quad \forall~q\in \mathcal{P}_1(\omega_{\z}),
\end{equation}
where $l = x$ or $y$, and $(x_i,y_i)$, $i=1,2\cdots,m$, are $m$ given points in $\omega_{\z}$. Then the nodal value of $G_hv_h$ can be defined as
\begin{equation*}
  G_hv_h({\z})=(p_x({\z}),p_y({\z})).
\end{equation*}

PPR: seeking a quadratic function $p\in\mathcal{P}_2(\omega_{\z})$, such that
\begin{equation}\label{gradint recovery formula1}
 \sum_{i=1}^{m}[p(x_i,y_i)-v_h(x_i,y_i)]q(x_i,y_i)=0, \quad \forall~q \in \mathcal{P}_2(\omega_{\z}).
\end{equation}
Then the nodal value of $G_hv_h$ can be defined as
\begin{equation}\label{gradint recovery formula}
  G_hv_h({\z})=(\partial_xp({\z}),\partial_yp({\z})).
\end{equation}

(c) Green's formula: the determination of $G_hv_h(\z)$ proceeds with the help of the Green's formula,
\begin{equation}\label{Green's formula}
  \int_{V_{\z}}\partial_iv\text{d}\mathbf{x}=\int_{\partial V_{\z}}v n_i\text{d}s.
\end{equation}
Here $n_i$ is the $i$th component of the unit outward-pointing normal $\mathbf{n}$. Then the nodal value of $G_hv_h$ can be defined as
\begin{equation*}
  G_hv_h({\z})=\frac{1}{|V_{\z}|}\Big(\int_{\partial V_{\z}}v_h n_1\text{d}s,\int_{\partial V_{\z}}v_h n_2\text{d}s\Big).
\end{equation*}

\begin{remark}
    The three definitions above are equivalent on a uniform triangular mesh.
\end{remark}

We next review some properties of the gradient recovery operator. In particular, properties \eqref{Boundedness} and \eqref{Consistency} are always valid on general grids.

(a) Boundedness (cf., e.g., \cite{Naga,Zienkiewicz,Zhang}):
\begin{equation}\label{Boundedness}
  \|G_hv_h\|_{0} \lesssim |v_h|_{1}, \quad \forall v_h\in V_h.
\end{equation}

(b) Consistency (cf., e.g., \cite{Guo,Zhang}):
\begin{equation}\label{Consistency}
  \|\nabla u-G_hu_I \|_{0}\lesssim h^2 \|u\|_{3}, \quad \forall u\in H^3(\Omega),
\end{equation}
where $u_I$ is the linear interpolation of $u$ in $V_h$.

Throughout the rest of this paper, we assume that the mesh $\mathcal{T}_h$ is sufficiently regular such that the following discrete Poincar\'{e} inequality holds:
\begin{equation}\label{Poincare inequality}
  \|v_h\|_{i}\lesssim \|G_hv_h\|_{i}, \quad \forall v_h\in V_h^0, \quad i=0,1.
\end{equation}

\begin{remark} In \cite{Guo2017}, the authors proved that \eqref{Poincare inequality} is valid for some uniform meshes. Moreover, numerical results indicate that \eqref{Poincare inequality} holds for some weakly regular grids.
\end{remark}
\begin{remark} On the boundary of a domain $\Omega$, modification in the gradient recovery operator $G_h$ is necessary to maintain the superconvergence property \eqref{Consistency}, as long as numerical data nearby are available. For more details, we refer to \cite{GZZZ2016}.
\end{remark}

\subsection{Calculation of the second derivatives}
It is impossible to calculate the second derivatives of a linear finite element function directly since its gradient is piecewise constant and discontinuous across element boundaries. To overcome this difficulty, we introduce some techniques for approximating the second derivatives of linear elements in this subsection.

The first technique for approximating Hessian is derived from gradient reconstruction techniques.
As $G_hv_h$ is continuous piecewise linear, hence further differentiation $DG_hv_h$ is possible.
Therefore, the Hessian matrix of a linear function can be approximated as follows:
\begin{equation}\label{weak Hessian}
               DG_hv_h=\begin{pmatrix}
               \partial_xG_h^xv_h & \partial_xG_h^yv_h\\
               \partial_yG_h^xv_h & \partial_yG_h^yv_h \\
             \end{pmatrix}.
\end{equation}
Note that $DG_hv_h$ is piecewise constant. Therefore, $DG_hv_h$ is not well defined on the common side of two elements.

The second technique for approximating Hessian overcomes this difficulty. The basic idea of a Hessian reconstruction operator $H_h:V_h \rightarrow V_h^2\times V_h^2$ is either applying the gradient recovery operator twice or directly computing the second derivative of quadratic polynomial $p$ in \eqref{gradint recovery formula1}. That is, the nodal value of the reconstructed Hessian is determined by:
\begin{equation}\label{equ:hessian1}
  (H_hv_h)({\z})=(G_h(G_hv_h))({\z}),
\end{equation}
or
\begin{equation}\label{equ:hessian2}
(H_hv_h)({\z})
=\left(
\begin{matrix}
 H_h^{xx}v_h({\z}) &  H_h^{xy}v_h({\z})\\
  H_h^{yx}v_h({\z}) &  H_h^{yy}v_h({\z})
\end{matrix}
\right)=\left(
\begin{matrix}
 \frac{\partial^2 p}{\partial x^2}({\z}) & \frac{\partial^2 p}{\partial x\partial y}({\z}) \\
 \frac{\partial^2 p}{\partial y\partial x}({\z})&  \frac{\partial^2 p}{\partial y^2}({\z})  \\
\end{matrix}
\right).
\end{equation}
By interpolating the whole region, we obtain
$$H_hv_h=\sum_{{\z}\in\N_h} H_hv_h({\z})\phi_{{\z}}.$$

The third approach for approximating Hessian is to apply Green's formula, namely,
\begin{equation*}
  \int_{V_{\z}}\partial_{ij}v\text{d}\mathbf{x}=\int_{\partial V_{\z}}(\partial_iv) n_j\text{d}s.
\end{equation*}
Then the nodal value of $H_h^{ij}v_h$ can be defined by
\begin{equation}\label{Green's formula Hessian}
  (H_h^{ij}v_h)({\z})=\frac{1}{|V_{\z}|}\int_{\partial V_{\z}}(\partial_iv_h) n_j\text{d}s,~i,j=x,y.
\end{equation}

\begin{remark} We shall remark that the approximated Hessian derived from \eqref{equ:hessian2} satisfies the symmetric property $H_h^{xy}=H_h^{yx}$. Moreover, the discrete Laplace operator defined by $H_h^{xx} + H_h^{yy}$ on regular pattern uniform grids is the well-known five-point finite difference scheme.
\end{remark}

\section{The Recovery Based Linear Finite Element Methods}

\subsection{Algorithm}

Recall the symmetric weak formulation \eqref{least-squares formulation 1} of the problem \eqref{eqn:para}. We introduce a bilinear form
\begin{equation}\label{continuous bilinear form}
  a(v,w)=\int_\Omega (A:D^2v)\cdot(A:D^2w),~\forall~v,w\in H^2(\Omega).
\end{equation}
The formulation \eqref{least-squares formulation 1} can be written as: Finding $u\in H^2(\Omega)$ such that
\begin{equation}\label{continuous formulation}
  a(u,v)=(f,A:D^2v),~\forall~v\in H^2(\Omega),
\end{equation}
which allows naturally the application of $C^1$ finite elements. However, due to the inconvenience of construction and implementation of $C^1$ elements, alternative discretization approaches of \eqref{least-squares formulation 1} are desirable. In \cite{Gallistl}, the author designed a mixed FEM in the framework of saddle-point problems.

In this paper, we shall discretize \eqref{least-squares formulation 1} in a recovered FEM. The essential issue is the calculation of second derivatives $D^2$. Define the function space
$$V_h^{0,0}=\left \{v_h\in V_h^0 \; : \; \text{the~tangential~trace~of}~G_hv_h~\text{on}~\partial \Omega~\text{vanishes} \right\}.$$
For any given function $v_h\in V_h^{0,0}$, there are two approaches to approximate the Hessian matrix of $v_h$. The first approach uses the gradient recovery operator $G_h$ to obtain a continuous piecewise linear function $G_hv_h=(G_h^xv_h,G_h^yv_h)$,
then the Hessian matrix of $v_h$ can be approximated by differentiating $G_hv_h$. In this paper, we utilize the PPR technique \eqref{gradint recovery formula1} to produce the $G_h$. Correspondingly,
for all $v_h,w_h\in V_h$, we define a bilinear form as:
\begin{equation}\label{bilinear form}
  a_{1,h}(v_h,w_h)=\sum_{T\in \mathcal{T}_h}\int_{T}\Big[(A:DG_hv_h)\cdot(A:DG_hw_h)+(\text{rot}G_hv_h)\cdot(\text{rot}G_hw_h)\Big]\text{d}x\text{d}y,
\end{equation}
where the rotation for a two-dimensional $H^1$ vector $\mathbf{v}=(v_1,v_2)$ is defined as
\begin{equation*}
  \text{rot}\mathbf{v}=\partial_yv_1-\partial_xv_2.
\end{equation*}
We propose the first numerical method in below.

\noindent \emph{\textbf{Scheme 1}}: The gradient recovery based linear (GRBL) FEM for approximation of \eqref{eqn:para} is to find $u_h\in V_h^{0,0}$ such that
\begin{equation}\label{discrete weak formulation}
   a_{1,h}(u_h,v_h)=(f,A:DG_hv_h), \quad \forall~v_h\in V_h.
\end{equation}
Note that the second term of the bilinear form $a_{1,h}(\cdot,\cdot)$ is a penalty term to ensure the stability of the method. We accordingly define an energy seminorm as
\begin{equation*}
  \|v_h\|_{a,1}^2 = a_{1,h}(v_h,v_h)=\|A:DG_hv_h\|_0^2+\|\text{rot}G_hv_h\|_0^2, \quad \forall~v_h\in V_h^0.
\end{equation*}

Another approach of approximating the Hessian matrix is the direct application of the Hessian recovery operator. Here, we use \eqref{equ:hessian2} to obtain operator $H_h$. We can thus alternatively define a bilinear form as:
\begin{equation}\label{bilinear form 2}
  a_{2,h}(v_h,w_h)=\sum_{T\in \mathcal{T}_h}\int_{T}\Big[(A:H_hv_h)\cdot(A:H_hw_h)\Big]\text{d}x\text{d}y.
\end{equation}
The second numerical method is proposed in below.

\noindent \emph{\textbf{Scheme 2}}: The Hessian recovery based linear (HRBL) FEM for approximation of \eqref{eqn:para} is to find $u_h\in V_h^{0,0}$ such that
\begin{equation}\label{discrete weak formulation 2}
   a_{2,h}(u_h,v_h)=(f,A:H_hv_h), \quad \forall~v_h\in V_h.
\end{equation}

We shall remark that, since the finite element space $V_h$ is not in $C^1$, numerical schemes \eqref{discrete weak formulation} and \eqref{discrete weak formulation 2} are both nonconforming methods.

\subsection{Stability of scheme $\eqref{discrete weak formulation}$}
Let the space of $H^1$ vector fields with vanishing tangential trace be
\begin{equation*}
    \mathbf{W} = \left\{v\in H^1(\Omega,{\color{red}\mathbb{R}^2}):\text{the~tangential~trace~of}~v~\text{on}~\partial \Omega~\text{vanishes}\right\}.
\end{equation*}
It is useful to note that, on convex domains, the following estimate holds \cite[Theorem 2.3]{Costabel}
\begin{equation}\label{inequality cite1}
  \|D \mathbf{w}\|_{0}^2\leq \|\text{rot} \mathbf{w}\|_0^2+\|\text{div} \mathbf{w}\|_0^2, \quad \forall~\mathbf{w}\in \mathbf{W}.
\end{equation}

Following the strategy proposed in \cite{Smears}, we define the function $\gamma$ by
\begin{equation*}
  \gamma=\text{tr}(A)/|A|^2.
\end{equation*}
We have the following lemma.

\begin{lmm}
There exists a positive constant $c$ independent of $h$, such that for any $v_h\in V_h^{0,0}$, it holds
\end{lmm}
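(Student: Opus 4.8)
The statement is evidently a coercivity (stability) bound of the form $\|v_h\|_{a,1}^2 \gtrsim |v_h|_2^2$ or, more precisely, something like $c\,\|G_hv_h\|_1^2 \le \|A:DG_hv_h\|_0^2 + \|\mathrm{rot}\,G_hv_h\|_0^2 = \|v_h\|_{a,1}^2$ for all $v_h\in V_h^{0,0}$, following the Smears--S\"uli strategy built on the Cordes condition and the Miranda--Talenti-type estimate \eqref{inequality cite1}. The plan is to mimic the continuous argument of \cite{Smears}: the Cordes condition \eqref{Cordes} guarantees that $\gamma\,\mathcal{L}$ is, in a quantitative sense, a small perturbation of the Laplacian, so that $\|\gamma(A:M) - \mathrm{tr}\,M\| \le \sqrt{1-\epsilon}\,\|M\|$ pointwise for any symmetric $2\times 2$ matrix $M$; applying this with $M = DG_hv_h$ (or its symmetric part) converts control of $A:DG_hv_h$ into control of $\mathrm{tr}\,DG_hv_h = \mathrm{div}\,G_hv_h$ up to a contraction factor.

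First I would record the pointwise algebraic Cordes inequality: for a.e.\ $x$ and every symmetric $M$, $|\gamma(x)\,A(x){:}M - \mathrm{tr}\,M| \le \sqrt{1-\epsilon}\,|M|_F$, where $|\cdot|_F$ is the Frobenius norm; this is the standard consequence of \eqref{Cordes} used in \cite{Smears}. Next, since $G_hv_h$ is piecewise linear and hence $DG_hv_h$ is piecewise constant on each $T$, I would work elementwise: on each $T$ write $DG_hv_h$, split it into its symmetric part $S_T$ and the antisymmetric part governed by $\mathrm{rot}\,G_hv_h$, and note that $|DG_hv_h|_F^2 = |S_T|_F^2 + \tfrac12(\mathrm{rot}\,G_hv_h)^2$ on $T$. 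Because $A$ is symmetric, $A{:}DG_hv_h = A{:}S_T$. Then, using the pointwise Cordes bound and the triangle inequality, $\|\mathrm{div}\,G_hv_h\|_0 = \|\mathrm{tr}\,S_T\|_0 \le \|\gamma\|_{L^\infty}\|A{:}DG_hv_h\|_0 + \sqrt{1-\epsilon}\,\|S\|_0 \le C\|A{:}DG_hv_h\|_0 + \sqrt{1-\epsilon}\,\big(\|D G_hv_h\|_0^2\big)^{1/2}$; since $\sqrt{1-\epsilon}<1$, the $\sqrt{1-\epsilon}\,\|DG_hv_h\|_0$ term can be absorbed after invoking \eqref{inequality cite1}. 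The one subtlety here is that \eqref{inequality cite1} is stated for $\mathbf{w}\in H^1(\Omega,\mathbb{R}^2)$ with vanishing tangential trace, and $G_hv_h$ is exactly such a field when $v_h\in V_h^{0,0}$ (this is precisely why that space was defined), so $\|DG_hv_h\|_0^2 \le \|\mathrm{rot}\,G_hv_h\|_0^2 + \|\mathrm{div}\,G_hv_h\|_0^2$ is legitimate. Combining, I get $\|DG_hv_h\|_0^2 \le \|\mathrm{rot}\,G_hv_h\|_0^2 + \big(C\|A{:}DG_hv_h\|_0 + \sqrt{1-\epsilon}\,\|DG_hv_h\|_0\big)^2$, and a Young's-inequality absorption of the cross term and the $(1-\epsilon)\|DG_hv_h\|_0^2$ term yields $\|DG_hv_h\|_0^2 \lesssim \|A{:}DG_hv_h\|_0^2 + \|\mathrm{rot}\,G_hv_h\|_0^2 = \|v_h\|_{a,1}^2$ with a constant depending only on $\epsilon$, $\alpha$, $\beta$. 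Finally, $\|G_hv_h\|_1^2 = \|G_hv_h\|_0^2 + \|DG_hv_h\|_0^2 \lesssim \|DG_hv_h\|_0^2$ by the discrete Poincar\'e inequality \eqref{Poincare inequality} applied to the vector field (componentwise), which closes the estimate.

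I expect the main obstacle to be the bookkeeping at the interface between the \emph{continuous} Cordes/Miranda--Talenti machinery and the \emph{discrete} recovered object $G_hv_h$: one must be careful that $G_hv_h$ genuinely lies in $\mathbf{W}$ so that \eqref{inequality cite1} applies verbatim (this rests on the definition of $V_h^{0,0}$ and the fact that $G_h$ maps into $V_h\times V_h\subset H^1$), and that $DG_hv_h$, although only piecewise constant, is still an $L^2$ matrix field to which the pointwise Cordes inequality applies a.e. No discrete Miranda--Talenti estimate in the sense of \cite{Smears} is actually needed here, since differentiation has been offloaded onto the smooth recovered gradient; the役 role of that estimate is played by the genuinely continuous inequality \eqref{inequality cite1}. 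Tracking the explicit dependence of $c$ on $\sqrt{1-\epsilon}$ (and hence ensuring $c>0$ strictly, using $\epsilon>0$, or handling $\epsilon=0$ with the convexity bound) is the other place where care is required, but it is routine once the absorption argument is set up.
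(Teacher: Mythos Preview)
Your proposal is correct and follows essentially the same route as the paper: the pointwise Cordes inequality $|\gamma A{:}B-\mathrm{tr}\,B|\le\sqrt{1-\epsilon}\,|B|$, combined with the Costabel--Dauge estimate \eqref{inequality cite1} applied to $G_hv_h\in\mathbf{W}$, and a Young--inequality absorption using $\epsilon>0$. The paper's version is slightly more streamlined in that it applies the Cordes bound directly to the full matrix $B=DG_hv_h$ (valid because $\gamma A-I$ is symmetric, so only the symmetric part of $B$ contributes anyway), making your explicit symmetric/antisymmetric splitting unnecessary; also, the paper states the lemma with $\gamma A$ rather than $A$ on the left and defers the $\|\gamma\|_{L^\infty}$ factor and the Poincar\'e step to the subsequent theorem, whereas you have folded those into the lemma itself.
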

\vskip -0.5cm
\begin{equation}\label{inequality3}
   \|\gamma A:DG_hv_h\|_0^2+\|\text{rot}~G_hv_h\|_0^2\geq c\|D G_hv_h\|_0^2.
\end{equation}

\begin{proof} From \cite{Gallistl,Smears}, it holds that for any $B\in \mathbb{R}^{2\times2}$
\begin{equation}\label{inequality4}
  |(\gamma A-I):B|=|\gamma A:B-\text{tr}(B)|\leq\sqrt{1-\epsilon}|B|.
\end{equation}
By triangle inequality, one gets
\begin{equation}\label{inequality5}
  |\gamma A:B|\geq  ||\text{tr}(B)|-|(\gamma A-I):B||.
\end{equation}
We take $B=DG_hv_h$, the combination of \eqref{inequality3}, \eqref{inequality4}, and the Young inequality results in
\begin{eqnarray}\nonumber
  &&\|\gamma A:DG_hv_h\|_0^2 \geq\|(|\text{tr}(B)|-|(\gamma A-I):B|)\|_0^2\\ \nonumber
  &=&\|\text{div}(G_hv_h)\|_0^2+\|(\gamma A-I):DG_hv_h\|_0^2-2\left(|\text{div}(G_hv_h)|,|(\gamma A-I):DG_hv_h|\right)\\ \nonumber
  &\geq&(1-\beta)\|\text{div}(G_hv_h)\|_0^2+ \Big(1-\frac{1}{\beta} \Big)\|(\gamma A-I):DG_hv_h\|_0^2\\\label{inequality cite2}
  &\geq&(1-\beta)\|\text{div}(G_hv_h)\|_0^2+ \Big(1-\frac{1}{\beta} \Big)(1-\epsilon)\|DG_hv_h\|_0^2.
\end{eqnarray}
In the last inequality, a constant $0<\beta<1$ is required.

Since $G_hv_h\in {\bf W}$, by $\eqref{inequality cite1}$ and $\eqref{inequality cite2}$, a simple calculation shows that
\begin{eqnarray}\nonumber
  &&\|\gamma A:DG_hv_h\|_0^2+\|\text{rot}~G_hv_h\|_0^2\\ \nonumber
  &\geq&(1-\beta)\|DG_hv_h\|_0^2+ \Big(1-\frac{1}{\beta} \Big)(1-\epsilon)\|DG_hv_h\|_0^2+\beta\|\text{rot}~G_hv_h\|_0^2\\ \nonumber
  &\geq&\left[(1-\beta)+ \Big(1-\frac{1}{\beta}\Big)(1-\epsilon)\right]\|DG_hv_h\|_0^2\\ \nonumber
  &=&c(\epsilon,\beta)\|DG_hv_h\|_0^2.
\end{eqnarray}
Here $\beta$ is chosen to satisfy $1-\epsilon < \beta <1$, so that $c(\epsilon,\beta)>0$. This completes the proof.
\end{proof}

The proposed algorithm is well-posed.

\begin{thm}\label{thm:Stablity}
If the mesh is sufficiently regular such that \eqref{Poincare inequality} holds, then scheme $\eqref{discrete weak formulation}$ is uniquely solvable.
\end{thm}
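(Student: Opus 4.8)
The plan is to exploit the finite-dimensionality of $V_h^{0,0}$. By a dimension count, \eqref{discrete weak formulation} is a square linear system (it suffices to take the test functions in $V_h^{0,0}$), so unique solvability is equivalent to triviality of the kernel; that is, it amounts to showing that the energy functional $\|\cdot\|_{a,1}$, which is a priori only a seminorm on $V_h$, is actually a \emph{norm} on $V_h^{0,0}$. So I would let $v_h\in V_h^{0,0}$ satisfy $\|v_h\|_{a,1}^2=\|A:DG_hv_h\|_0^2+\|\text{rot}\,G_hv_h\|_0^2=0$ and aim to conclude $v_h=0$.

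First I would pass from this vanishing of the energy to the coercivity estimate \eqref{inequality3}. The scalar function $\gamma=\text{tr}(A)/|A|^2$ is, by \eqref{posideft}, pointwise bounded above by a constant $\gamma_{\max}$ depending only on $\alpha,\beta$ (its range lies in $[\alpha\beta^{-2},\beta\alpha^{-2}]$), so $\|\gamma A:DG_hv_h\|_0\le\gamma_{\max}\|A:DG_hv_h\|_0=0$; together with $\|\text{rot}\,G_hv_h\|_0=0$ this forces the left-hand side of \eqref{inequality3} to vanish, hence $\|DG_hv_h\|_0=0$, i.e. $DG_hv_h\equiv0$. Now $G_hv_h$ is a continuous piecewise-linear vector field with vanishing (weak) gradient, hence equals a constant vector $\mathbf{c}$ on $\Omega$ ($\Omega$ being convex, hence connected). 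But $v_h\in V_h^{0,0}$ forces the tangential trace of $G_hv_h=\mathbf{c}$ on $\partial\Omega$ to vanish; since $\partial\Omega$, as the boundary of a bounded two-dimensional convex domain, is not contained in a line, its unit tangent field is not everywhere parallel to a single direction, and this forces $\mathbf{c}=0$, i.e. $G_hv_h\equiv0$. Finally I would invoke the discrete Poincar\'e inequality \eqref{Poincare inequality} with $i=0$ to get $\|v_h\|_0\lesssim\|G_hv_h\|_0=0$, whence $v_h=0$. This proves that $\|\cdot\|_{a,1}$ is a norm on $V_h^{0,0}$ and completes the argument.

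The main obstacle — and the reason both \eqref{inequality3} and the hypothesis \eqref{Poincare inequality} are indispensable — is the gap between controlling the Hessian surrogate and controlling $v_h$ itself: vanishing of the energy only yields $DG_hv_h=0$, i.e. that the recovered gradient is \emph{constant}, not zero. Closing this gap requires (i) the boundary constraint encoded in $V_h^{0,0}$ to eliminate that constant and (ii) the discrete Poincar\'e inequality to transfer information from $G_hv_h$ back to $v_h$. A secondary technical point to watch is the replacement of $\gamma A:DG_hv_h$ (as it appears in \eqref{inequality3}) by $A:DG_hv_h$ (as it appears in $\|\cdot\|_{a,1}$), which is legitimate precisely because the scalar $\gamma$ is bounded away from $0$ and $\infty$ under \eqref{posideft}.
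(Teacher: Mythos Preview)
Your proof is correct and shares the same backbone as the paper's: both invoke Lemma~3.1 (inequality \eqref{inequality3}) to pass from control of $\|v_h\|_{a,1}$ to control of $\|DG_hv_h\|_0$, and both conclude by showing that $\|\cdot\|_{a,1}$ is a genuine norm on $V_h^{0,0}$. The difference is in how that last step is handled. The paper establishes the two-sided equivalence $\|v_h\|_{a,1}\sim\|DG_hv_h\|_0$, then \emph{cites} \cite{Guo2017} for the fact that $\|DG_hv_h\|_0$ is already a norm on $V_h^{0,0}$, and closes with Lax--Milgram. You instead exploit finite-dimensionality (a square system, so unique solvability reduces to triviality of the kernel) and supply directly the argument the paper outsources: $DG_hv_h=0$ forces $G_hv_h$ to be a constant vector, the vanishing tangential-trace condition built into $V_h^{0,0}$ kills that constant, and the discrete Poincar\'e inequality \eqref{Poincare inequality} then yields $v_h=0$. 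Your route is more self-contained and slightly more elementary; the paper's route records the quantitative coercivity bound \eqref{inequalityleft}, which it reuses in the $H^2$ error analysis (Theorem~\ref{thm:H2 error}).
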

\begin{proof}The lower bound $\eqref{inequality3}$ yields that
\begin{equation}\label{inequality6}
  \text{max}\left\{\|\gamma\|_{L^{\infty}(\Omega)}^2,1\right\}\|v_h\|_{a,1}^2\geq\|\gamma A:DG_hv_h\|_0^2+\|\text{rot}~G_hv_h\|_0^2\geq c(\epsilon,\beta)\|DG_hv_h\|_0^2,
\end{equation}
which implies that
\begin{equation}\label{inequalityleft}
  \|v_h\|_{a,1} \geq d(\gamma,\epsilon,\beta)\|D G_hv_h\|_0.
\end{equation}
Here $d(\gamma,\epsilon,\beta)=c(\epsilon,\beta)^{1/2}/\text{max}\left\{\|\gamma\|_{L^{\infty}(\Omega)},1\right\}$ is independent of mesh size $h$.

On the other hand, we have
\begin{equation}\label{inequalityright}
  \|v_h\|_{a,1}\leq (\|A\|_{\infty}^2+2)^{1/2}\|D(G_hu_h)\|_{0}.
\end{equation}
Therefore, the seminorm $\|v_h\|_{a,1}$ is equivalent to $\|D(G_hu_h)\|_0$ in $V_h^{0,0}$.
In \cite{Guo2017}, it has been proven that $\|D(G_hu_h)\|_0$ is a norm in $V_h^{0,0}$, which implies $\|u_h\|_{a,1}$ is a norm in $V_h^{0,0}$. By the Lax-Milgram theorem, there exists a unique solution to $\eqref{discrete weak formulation}$ in $V_h^{0,0}$.
\end{proof}

\section{Error Estimates}
In this section, we first develop error estimation in the $H^2$ norm. Then, we establish the $H^1$ and $L^2$ error bounds in a special case.

\subsection{Error Estimate in $H^2$ Norm}
We have an optimal error estimate.

\begin{lmm}
Let $u\in H^{2+\alpha}(\Omega)$ with $0<\alpha<1$, then the following inequality holds true for any general grids,
\begin{equation}\label{inequalityuhuI}
   \|D^2 u-DG_hu_I\|_{0}\lesssim h^\alpha\|u\|_{2+\alpha}.
\end{equation}
Consequently, it follows that
\begin{equation}\label{inequalityuI}
   \|\text{rot}~G_hu_I\|_{0}\lesssim h^\alpha\|u\|_{2+\alpha}.
\end{equation}
Here $u_I$ is the linear interpolation of $u$.
\end{lmm}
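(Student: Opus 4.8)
The plan is to first establish the local estimate $\|D^2u-DG_hu_I\|_{0,T}\lesssim h^\alpha\|u\|_{2+\alpha,\omega_T}$ on each element $T$ and then sum over $\mathcal{T}_h$; the two ingredients are the quadratic reproduction property of the PPR recovery and a fractional Bramble--Hilbert argument. Fix $T\in\mathcal{T}_h$ and let $\omega_T=\bigcup\{\omega_{\mathbf z}:\mathbf z\in\bar T\cap\mathcal{N}_h\}$ be the union of the vertex patches of the three vertices of $T$; mesh regularity gives $\text{diam}\,\omega_T\sim h$ and bounded overlap of the $\omega_T$. The key preliminary fact is that, because $G_h$ reproduces gradients of quadratics, $G_hq_I=\nabla q$ and hence $DG_hq_I=D^2q$ identically on $T$ for every $q\in\mathcal{P}_2$; since $G_h$ and nodal interpolation are linear, this yields
\begin{equation*}
 D^2u-DG_hu_I=D^2(u-q)-DG_h(u-q)_I\qquad\text{on }T,\quad\forall\,q\in\mathcal{P}_2 .
\end{equation*}
One is therefore free to subtract an arbitrary quadratic, which both removes the lower-order content of $u$ and, as seen below, produces the fractional power of $h$.

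Next I would bound the two terms on the right on the fixed element $T$. The first is immediate, $\|D^2(u-q)\|_{0,T}\lesssim|u-q|_{2,\omega_T}$. For the second I would chain three standard estimates: an inverse inequality for the piecewise linear vector field $G_h(u-q)_I$, namely $\|DG_h(u-q)_I\|_{0,T}\lesssim h^{-1}\|G_h(u-q)_I\|_{0,\omega_T}$; the localized $H^1$-stability of the recovery (the local version of \eqref{Boundedness}), $\|G_h(u-q)_I\|_{0,\omega_T}\lesssim|(u-q)_I|_{1,\omega_T}$; and the linear interpolation error bound together with the triangle inequality, $|(u-q)_I|_{1,\omega_T}\le|u-q|_{1,\omega_T}+|u-q-(u-q)_I|_{1,\omega_T}\lesssim|u-q|_{1,\omega_T}+h\,|u-q|_{2,\omega_T}$. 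Combining these,
\begin{equation*}
 \|D^2u-DG_hu_I\|_{0,T}\lesssim|u-q|_{2,\omega_T}+h^{-1}|u-q|_{1,\omega_T}.
\end{equation*}

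Now I would choose $q=q_T\in\mathcal{P}_2$ by the fractional Bramble--Hilbert lemma so that $|u-q|_{j,\omega_T}\lesssim h^{2+\alpha-j}|u|_{2+\alpha,\omega_T}$ for $j=1,2$; substituting gives $\|D^2u-DG_hu_I\|_{0,T}\lesssim h^\alpha|u|_{2+\alpha,\omega_T}$, which is the one and only place where $\alpha\in(0,1)$ enters. Squaring, summing over $T\in\mathcal{T}_h$, and using the finite overlap of the patches (so that $\sum_T|u|_{2+\alpha,\omega_T}^2\lesssim|u|_{2+\alpha,\Omega}^2$, by writing the Gagliardo seminorm as a double integral) gives \eqref{inequalityuhuI}. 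Estimate \eqref{inequalityuI} is then a free corollary: since $\text{rot}(\nabla u)=\partial_{xy}u-\partial_{yx}u=0$, the quantity $\text{rot}\,G_hu_I$ equals the difference of two off-diagonal entries of $DG_hu_I-D^2u$, so $\|\text{rot}\,G_hu_I\|_0\le\sqrt{2}\,\|D^2u-DG_hu_I\|_0\lesssim h^\alpha\|u\|_{2+\alpha}$.

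The main obstacle, and the only step that is not bookkeeping, is the quadratic reproduction identity $DG_hq_I=D^2q$: it is what makes the subtraction of an arbitrary quadratic legitimate and is the mechanism that both cancels the lower-order terms and generates the gain $h^\alpha$ (the extra $h^{-1}$ from the inverse estimate being absorbed because $|u-q|_{2,\omega_T}$ itself is of order $h^\alpha$). It relies on the specific PPR construction (the patchwise least-squares system being unisolvent and exact on $\mathcal{P}_2$), and on general unstructured grids one must likewise verify the localized stability $\|G_hv_h\|_{0,\omega_T}\lesssim|v_h|_{1,\omega_T}$, which again follows from unisolvency plus an affine scaling argument; the inverse estimate, the interpolation bound, and the patch-overlap counting are entirely standard. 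As an alternative that sidesteps the fractional Bramble--Hilbert lemma, one can prove the integer endpoints $\|D^2u-DG_hu_I\|_0\lesssim\|u\|_2$ and $\|D^2u-DG_hu_I\|_0\lesssim h\|u\|_3$ by exactly the same device and then interpolate the operator $u\mapsto D^2u-DG_hu_I$ between $H^2$ and $H^3$.
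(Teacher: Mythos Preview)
Your argument is correct but follows a genuinely different route from the paper. The paper's proof is global: it inserts the intermediate field $\mathbf{w}_h=(\nabla u)_I\in V_h\times V_h$, writes
\[
\|D^2u-DG_hu_I\|_0\le\|D^2u-D(\nabla u)_I\|_0+h^{-1}\|(\nabla u)_I-G_hu_I\|_0,
\]
and then controls the second term by a further triangle inequality $\|(\nabla u)_I-\nabla u\|_0+\|\nabla u-G_hu_I\|_0$, invoking standard fractional interpolation estimates for the first piece and the recovery consistency property \eqref{Consistency} (in its fractional form) for the second. Your proof is local and from first principles: you exploit the quadratic reproduction $DG_hq_I=D^2q$ of PPR to subtract an arbitrary $q\in\mathcal{P}_2$ on each patch, then run an inverse inequality, localized $H^1$-stability of $G_h$, and a fractional Bramble--Hilbert choice of $q$, before summing with finite overlap. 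The paper's route is shorter because it treats \eqref{Consistency} as a black box; your route is more self-contained and makes explicit that only polynomial preservation and local stability of the recovery are needed, which in effect re-derives the fractional analogue of \eqref{Consistency} along the way. One small cosmetic point: in your chain the inverse inequality only needs $\|G_h(u-q)_I\|_{0,T}$ on the right, and the localized stability then reads $\|G_hv_h\|_{0,T}\lesssim|v_h|_{1,\omega_T}$; writing $\omega_T$ on the left of the stability bound would require a slightly larger patch, but this does not affect the argument. The derivation of \eqref{inequalityuI} is the same in both proofs.
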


\begin{proof}
By the triangle inequality and the inverse inequality, we have, $\forall~\mathbf{w}_h\in V_h\times V_h$,
\begin{eqnarray*}
 \|D^2 u-DG_hu_I\|_{0}&\leq& \|D^2 u-D\mathbf{w}_h\|_{0}+\|D\mathbf{w}_h-DG_hu_I\|_{0}\\
  &\lesssim&\|D^2 u-D\mathbf{w}_h\|_{0}+h^{-1}\|\mathbf{w}_h-G_hu_I\|_{0}.
\end{eqnarray*}
Setting $\mathbf{w}_h=(\nabla u)_{I}$, then $\|D^2 u-D\mathbf{w}_h\|_{0}\lesssim h^\alpha\|u\|_{2+\alpha}$. Using $\eqref{Consistency}$ and standard estimates of linear interpolation, we derive that
\begin{eqnarray*}
h^{-1}\|(\nabla u)_{I}-G_hu_I\|_{0}&\leq& h^{-1}(\|(\nabla u)_{I}-\nabla u\|_{0}+\|\nabla u-G_hu_I\|_{0})\\
  &\lesssim&h^\alpha\|u\|_{2+\alpha}.
\end{eqnarray*}
Hence,
\begin{equation}\nonumber
   \|D^2 u-DG_hu_I\|_{0}\lesssim h^\alpha\|u\|_{2+\alpha}.
\end{equation}
Notice that,
\begin{equation*}
  \|\text{rot}~G_hu_I\|_0=\|\text{rot}~G_hu_I-\text{rot}~\nabla u\|_0\leq \|D^2 u-DG_hu_I\|_{0},
\end{equation*}
we get $\eqref{inequalityuI}$ immediately.
\end{proof}

\begin{thm}\label{thm:H2 error}
Let $u$ and $u_h$ be solutions of $\eqref{eqn:para}$ and $\eqref{discrete weak formulation}$, respectively. If $u\in H^{2+\alpha}(\Omega)$, then for any general grids, there holds
\begin{equation}\label{3.8}
 \|u_h-u_I\|_{a,1} \lesssim  h^\alpha\|u\|_{2+\alpha}.
\end{equation}
Consequently, we have
\begin{equation}\label{H2 estimate}
  \|D^2u-DG_hu_h\|_0 \lesssim   h^\alpha\|u\|_{2+\alpha}.
\end{equation}
\end{thm}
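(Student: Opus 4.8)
The plan is to run a standard coercivity-plus-consistency argument in the energy seminorm, in the spirit of the second Strang lemma. Two features make this work cleanly: (i) by the very definition \eqref{bilinear form} of $a_{1,h}$ one has $\|v_h\|_{a,1}^2=a_{1,h}(v_h,v_h)$, so coercivity is tautological; and (ii) since $u$ solves \eqref{eqn:para}, the identity $f=A:D^2u$ holds a.e.\ in $\Omega$, so inserting the exact data into the scheme \eqref{discrete weak formulation} turns the consistency error into the Hessian-recovery error $D^2u-DG_hu_I$, which the preceding lemma already controls. I set $e_h:=u_h-u_I\in V_h^0$; to apply the norm equivalence from the proof of Theorem~\ref{thm:Stablity} later I will also need $e_h\in V_h^{0,0}$, i.e.\ $u_I\in V_h^{0,0}$, which I arrange via the boundary-modified recovery operator of Remark~2.4 (or a harmless modification of $u_I$ near $\partial\Omega$).

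First I would establish \eqref{3.8}. Taking $e_h$ as the test function in \eqref{discrete weak formulation},
\[
\|e_h\|_{a,1}^2=a_{1,h}(u_h,e_h)-a_{1,h}(u_I,e_h)=(f,A:DG_he_h)-a_{1,h}(u_I,e_h);
\]
substituting $f=A:D^2u$ together with the definition of $a_{1,h}$ and regrouping elementwise gives
\[
\|e_h\|_{a,1}^2=\sum_{T\in\mathcal{T}_h}\int_T\big(A:(D^2u-DG_hu_I)\big)\cdot\big(A:DG_he_h\big)-\sum_{T\in\mathcal{T}_h}\int_T(\text{rot}\,G_hu_I)\cdot(\text{rot}\,G_he_h).
\]
Since $\text{rot}\,\nabla u=0$, the second sum equals $-\sum_T\int_T\text{rot}(G_hu_I-\nabla u)\cdot\text{rot}\,G_he_h$, so both sums involve only the recovery error. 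Applying Cauchy--Schwarz over $\Omega$, the uniform bound on $A$, and the elementary inequalities $\|A:DG_he_h\|_0\le\|e_h\|_{a,1}$ and $\|\text{rot}\,G_he_h\|_0\le\|e_h\|_{a,1}$ (both immediate from the definition of the energy seminorm) yields
\[
\|e_h\|_{a,1}^2\lesssim\big(\|D^2u-DG_hu_I\|_0+\|\text{rot}\,G_hu_I\|_0\big)\|e_h\|_{a,1},
\]
and the bracketed quantity is $\lesssim h^\alpha\|u\|_{2+\alpha}$ by \eqref{inequalityuhuI} and \eqref{inequalityuI}. Dividing by $\|e_h\|_{a,1}$ proves \eqref{3.8}. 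For the \emph{consequently} part \eqref{H2 estimate}, I would use the triangle inequality $\|D^2u-DG_hu_h\|_0\le\|D^2u-DG_hu_I\|_0+\|DG_he_h\|_0$, bound the first summand by \eqref{inequalityuhuI} and the second by the norm equivalence $\|DG_he_h\|_0\lesssim\|e_h\|_{a,1}$ coming from \eqref{inequalityleft}--\eqref{inequality6} (legitimate since $e_h\in V_h^{0,0}$), and combine with \eqref{3.8}.

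The main obstacle is not the algebra, which is routine, but two structural points. The first is that the consistency error must reduce \emph{exactly} to the Hessian-recovery error: this is precisely why the symmetric least-squares formulation \eqref{least-squares formulation 1} is used rather than the asymmetric one, and it is where the preceding lemma does all the substantive work through the PPR consistency \eqref{Consistency} and an inverse inequality. The second is the boundary bookkeeping needed to guarantee $u_I\in V_h^{0,0}$, so that the norm equivalence from the proof of Theorem~\ref{thm:Stablity} may legitimately be applied to $e_h$; and, throughout, one relies on the standing assumption that the mesh is regular enough for the discrete Poincar\'{e} inequality \eqref{Poincare inequality} to hold.
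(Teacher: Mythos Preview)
Your argument is correct and follows essentially the same route as the paper: expand $\|u_h-u_I\|_{a,1}^2$ via bilinearity, substitute the scheme \eqref{discrete weak formulation} and $f=A:D^2u$, apply Cauchy--Schwarz, and invoke \eqref{inequalityuhuI}--\eqref{inequalityuI}; the consequence \eqref{H2 estimate} then follows from \eqref{inequality6} and a triangle inequality exactly as you indicate. Your extra care about $u_I\in V_h^{0,0}$ is a valid point the paper glosses over, but note that the discrete Poincar\'e inequality \eqref{Poincare inequality} is not actually needed here (the theorem is stated ``for any general grids''); only the bound \eqref{inequality6} from Lemma~3.1 is used.
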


\begin{proof}
As $u$ solves $(1.1)$ strongly in $L^2(\Omega)$, it holds that
\begin{eqnarray*}
  \|u_h-u_I\|_{a,1}^2&=&a_{1,h}(u_h-u_I,u_h-u_I)\\
  &=&a_{1,h}(u_h,u_h-u_I)-a_{1,h}(u_I,u_h-u_I)\\
  &=&(f,A:DG_h(u_h-u_I))-a_{1,h}(u_I,u_h-u_I)\\
  &=&(A:(D^2u-DG_hu_I),A:DG_h(u_h-u_I))-(\text{rot}G_h u_I,\text{rot}G_h(u_h-u_I)).
\end{eqnarray*}
Using the Cauchy-Schwarz inequality and the triangle inequality, it yields that
\begin{eqnarray*}
  \|u_h-u_I\|_{a,1}^2&\leq&\|A\|_\infty\|D^2u-DG_hu_I\|_0\cdot\|A:DG_h(u_h-u_I)\|_0\\
  &&+\|\text{rot}G_hu_I\|_0\cdot \|\text{rot}G_h(u_h-u_I)\|\\
  &\lesssim&\|u_h-u_I\|_{a,1}\cdot(\|D^2u-DG_hu_I\|_0+\|\text{rot}G_hu_I\|_0).
\end{eqnarray*}
Dividing $\|u_h-u_I\|_a$ on both sides, we have
\begin{equation}\label{uhuIerra}
  \|u_h-u_I\|_{a,1}\lesssim\|D^2u-DG_hu_I\|_0+\|\text{rot}G_hu_I\|_0.
\end{equation}
Plugging $\eqref{inequalityuhuI}$ and $\eqref{inequalityuI}$ into $\eqref{uhuIerra}$, we obtain the estimate $\eqref{3.8}$.

Combining $\eqref{inequality6}$ and $\eqref{3.8}$, we derive that
\begin{equation}\label{DuhuIerra}
   \|DG_h(u_h-u_I)\|_0\lesssim h^\alpha\|u\|_{2+\alpha}.
\end{equation}
The estimate $\eqref{H2 estimate}$ is a direct consequence of  $\eqref{inequalityuhuI}$ and $\eqref{DuhuIerra}$.
\end{proof}

\begin{remark}By \eqref{DuhuIerra}, we have
\[
   \|{\rm rot}~ G_h(u_h-u_I)\|_0\lesssim h^\alpha\|u\|_{2+\alpha}.
\]
Combining the above estimate with \eqref{inequalityuI}, we obtain that
\begin{equation}\label{rotuh}
   \|{\rm rot}~G_h u_h\|_0\lesssim h^\alpha\|u\|_{2+\alpha}.
\end{equation}
\end{remark}

\begin{remark}
We observe that the proof of convergence and stability analysis for the scheme \eqref{discrete weak formulation} is not applicable to the scheme \eqref{discrete weak formulation 2}. Nonetheless, numerical results confirm the robustness and the optimal convergence of the scheme \eqref{discrete weak formulation 2}.
\end{remark}

\subsection{Error Estimates in $H^1$ and $L^2$ Norms}
In this subsection, we establish error bounds in a special case of $A=\alpha I$, where $\alpha$ is a constant. We first apply the Aubin-Nitsche technique to estimate the $H^1$ error. To this end, we introduce the following auxiliary problem. For $\phi \in H^1_0(\Omega)$, let $\partial \Omega$ be $C^3$ and $U_\phi\in H_0^1 (\Omega)$ be a weak solution of the following equation:
\begin{eqnarray*}
\left \{
\begin{array}{lll}
-\alpha^2 \Delta u=\phi \quad &\mbox{in} \quad~\Omega,\\
\quad~~~~~u=0  \quad &\mbox{on} \quad \partial\Omega.
\end{array}
\right.
\end{eqnarray*}
From [15, 6.3, Theorem 5], we have $U_\phi\in H^3(\Omega)$ and
\begin{equation}\label{priori estimate}
  \|U_\phi\|_3\lesssim\parallel\phi\|_1.
\end{equation}
By applying Green's formula, we derive that
\begin{eqnarray}\label{eqn:paranew}
\begin{array}{lll}
(\alpha^2 \Delta U_\phi, {\rm div} {\bf v})=({\bf v},\nabla \phi),\quad \forall~{\bf v}\in (H^1(\Omega))^2.
\end{array}
\end{eqnarray}

The following estimates hold.

\begin{thm}\label{thm:H1 error}
 Let $u$ and $u_h$ be solutions of $\eqref{eqn:para}$ and $\eqref{discrete weak formulation}$, respectively. If $u\in H^3(\Omega)$, then there holds
\begin{equation}\label{3.9}
  \|G_hu_h-G_hu_I\|_0 \lesssim h^2\|u\|_3,
\end{equation}
consequently,
\begin{equation}\label{H1 estimate}
  \|\nabla u-G_hu_h\|_0 \lesssim   h^2\|u\|_3.
\end{equation}
\end{thm}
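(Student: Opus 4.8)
The plan is to use the Aubin--Nitsche duality argument built on the auxiliary problem $-\alpha^2\Delta U_\phi=\phi$. Since we want to bound $\|G_hu_h-G_hu_I\|_0$, I would test against an arbitrary $\phi\in H_0^1(\Omega)$ (or rather estimate $\sup_\phi (G_hu_h-G_hu_I,\nabla\phi)/\|\phi\|$-type quantities, using the discrete Poincar\'e inequality \eqref{Poincare inequality} to pass between $\|G_h(\cdot)\|_0$ and the functional), and rewrite the pairing using \eqref{eqn:paranew}: with ${\bf v}=G_h(u_h-u_I)$ we get $(G_h(u_h-u_I),\nabla\phi)=(\alpha^2\Delta U_\phi,\operatorname{div}G_h(u_h-u_I))$. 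In the case $A=\alpha I$ the bilinear form simplifies, since $A:DG_hv_h=\alpha\operatorname{tr}(DG_hv_h)=\alpha\operatorname{div}G_hv_h$ and the rotation term is a genuine penalty; so $(\alpha^2\Delta U_\phi,\operatorname{div}G_h(u_h-u_I))$ is, up to the recovery error on the $U_\phi$ side, comparable to $a_{1,h}((U_\phi)_I,u_h-u_I)$.

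The key steps, in order: (i) insert and subtract the interpolant $(U_\phi)_I$, so that $(\alpha^2\Delta U_\phi,\operatorname{div}G_h(u_h-u_I))$ splits into a consistency piece involving $\|\Delta U_\phi-\operatorname{div}G_h(U_\phi)_I\|_0\lesssim h^{?}$-type bounds (here one uses Lemma with $\alpha=1$, i.e. \eqref{inequalityuhuI}--\eqref{Consistency}, on $U_\phi\in H^3$, giving an $O(h)$ — actually we need to be careful and extract the full $O(h^2)$, see below) and a term of the form $\alpha^2(\operatorname{div}G_h(U_\phi)_I,\operatorname{div}G_h(u_h-u_I))$; (ii) recognize that this last term, plus the rotation penalty term $(\operatorname{rot}G_h(U_\phi)_I,\operatorname{rot}G_h(u_h-u_I))$ which we add and subtract freely (it is $O(h)$ small by \eqref{inequalityuI} applied to $U_\phi$), is exactly $a_{1,h}((U_\phi)_I,u_h-u_I)$; (iii) use the Galerkin-type identity for the error: from Scheme 1 and the strong form, $a_{1,h}(u_h,v_h)-a_{1,h}(u_I,v_h)=(A:(D^2u-DG_hu_I),A:DG_hv_h)-(\operatorname{rot}G_hu_I,\operatorname{rot}G_hv_h)$ as already derived in the proof of Theorem~\ref{thm:H2 error}; taking $v_h=(U_\phi)_I$ and combining with the $H^2$-type consistency bounds \eqref{inequalityuhuI}, \eqref{inequalityuI} for both $u$ and $U_\phi$ yields a product of two ``$O(h)$'' factors, hence $O(h^2)$; (iv) assemble, divide out the $\|\phi\|_1$, invoke \eqref{priori estimate} and \eqref{Poincare inequality} to conclude \eqref{3.9}; (v) finally obtain \eqref{H1 estimate} by the triangle inequality $\|\nabla u-G_hu_h\|_0\le\|\nabla u-G_hu_I\|_0+\|G_hu_I-G_hu_h\|_0$ and the consistency estimate \eqref{Consistency}.

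The main obstacle I anticipate is bookkeeping the orders so that the two halves of the duality pairing each contribute one power of $h$ and genuinely multiply to $h^2$, rather than one factor being merely $h^\alpha$ with $\alpha<1$ or one factor failing to be small at all. Concretely: the ``primal'' error $\|u_h-u_I\|_{a,1}$ is $O(h)$ by Theorem~\ref{thm:H2 error} with $\alpha=1$ (using $u\in H^3$), and the ``dual'' side needs the analogous $O(h)$ control of $\|D^2U_\phi-DG_h(U_\phi)_I\|_0$ and $\|\operatorname{rot}G_h(U_\phi)_I\|_0$, which is \eqref{inequalityuhuI}/\eqref{inequalityuI} with $\alpha=1$ applied to $U_\phi\in H^3$, licensed by \eqref{priori estimate}. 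The delicate point is step (i)--(ii): one must make sure that the term which is \emph{not} quadratically small — namely $a_{1,h}((U_\phi)_I,u_h-u_I)$ — is handled exclusively through the Galerkin identity (iii) and not estimated crudely, since $\|(U_\phi)_I\|_{a,1}$ is only $O(1)$. A secondary technical nuisance is that $(U_\phi)_I$ need not lie in $V_h^{0,0}$ (tangential trace of $G_h(U_\phi)_I$ on $\partial\Omega$), so one either restricts to interior contributions, invokes the boundary modification of $G_h$ alluded to in the earlier remark \cite{GZZZ2016}, or notes that Scheme 1's test space is all of $V_h$ so $v_h=(U_\phi)_I$ is in fact admissible as a test function — I would use the last observation, and only the trial solution $u_h$ need satisfy the boundary constraint.
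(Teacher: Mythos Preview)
Your overall strategy---Aubin--Nitsche duality with the auxiliary problem $-\alpha^2\Delta U_\phi=\phi$, the pairing identity \eqref{eqn:paranew}, splitting via the interpolant $(U_\phi)_I$, invoking the discrete scheme with test function $(U_\phi)_I$, and finishing with \eqref{Consistency} for \eqref{H1 estimate}---is exactly the route the paper takes. Steps (i), (ii), (v) and the treatment of the rotation terms coincide with the paper's argument.

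There is, however, a genuine gap in step (iii). After applying the Galerkin identity you arrive at
\[
a_{1,h}(u_h-u_I,(U_\phi)_I)=\alpha^2\bigl(\Delta u-\operatorname{div}G_hu_I,\,\operatorname{div}G_h(U_\phi)_I\bigr)-(\operatorname{rot}G_hu_I,\operatorname{rot}G_h(U_\phi)_I).
\]
The rot--rot term is indeed $O(h)\cdot O(h)$ by \eqref{inequalityuI} on both sides. But the first term is \emph{not} a product of two $O(h)$ factors: $\|\operatorname{div}G_h(U_\phi)_I\|_0$ is only $O(\|U_\phi\|_3)$, so Cauchy--Schwarz with \eqref{inequalityuhuI} yields merely $O(h)$. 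The bounds \eqref{inequalityuhuI}, \eqref{inequalityuI} ``for both $u$ and $U_\phi$'' do not close this term. What the paper actually does---and what your sketch is missing---is to insert $\Delta U_\phi$ once more and use \eqref{eqn:paranew} a second time: writing $\operatorname{div}G_h(U_\phi)_I=(\operatorname{div}G_h(U_\phi)_I-\Delta U_\phi)+\Delta U_\phi$ turns the residual into another $O(h)\cdot O(h)$ product and converts the main piece into $(\nabla u-G_hu_I,\nabla\phi)$, which is $O(h^2)\cdot\|\phi\|_1$ \emph{directly} by the superconvergence estimate \eqref{Consistency}. This second application of \eqref{eqn:paranew} together with \eqref{Consistency} (an $O(h^2)$, not $O(h)$, bound) is the essential ingredient that makes the duality argument reach $h^2$; without it, your step (iii) stalls at $O(h)$.
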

\begin{proof}
For any $\phi\in H^1_0$, we have
\begin{eqnarray}\nonumber
  & & (G_h(u_h-u_I),\nabla \phi) \\ \nonumber
  &=&(\text{div}G_h(u_h-u_I),\alpha^2\Delta U_\phi) \\ \nonumber
                            &=&(\alpha\text{div}G_hu_h-\alpha\Delta u,\alpha\Delta U_\phi-\alpha\text{div}G_h(U_\phi)_I)+\\  \nonumber
                            &&(\alpha^2\Delta U_\phi,\Delta u-\text{div}G_hu_I)+(\alpha\text{div}G_hu_h-\alpha\Delta u,\alpha\text{div}G_h(U_\phi)_I)\\  \nonumber
                            &=&(\alpha\text{div}G_hu_h,\alpha\text{div}G_h(U_\phi)_I)-(\alpha\Delta u,\alpha\text{div}G_h(U_\phi)_I)+ \\ \nonumber
                            &&(\alpha\text{div}G_hu_h,\alpha\text{div}G_h(U_\phi)_I)-(f,\alpha\text{div}G_h(U_\phi)_I)\\ \nonumber
                            &=&(\alpha(\text{div}G_hu_h-\Delta u),\alpha(\Delta U_\phi-\text{div}G_h(U_\phi)_I))-(\nabla \phi,\nabla u-G_hu_I)+ \\ \nonumber
                            &&(\alpha\text{div}G_hu_h,\alpha\text{div}G_h(U_\phi)_I)-(\alpha\text{div}G_hu_h,\alpha\text{div}G_h(U_\phi)_I)-(\text{rot} G_hu_h,\text{rot} G_h(U_\phi)_I) \\ \nonumber
                            &=&(\alpha(\text{div}G_hu_h-\Delta u),\alpha(\Delta U_\phi-\text{div}G_h(U_\phi)_I))-(\nabla \phi,\nabla u-G_hu_I)-(\text{rot} G_hu_h,\text{rot} G_h(U_\phi)_I)\\ \label{error equation}
                            &\lesssim& h^2\|u\|_3(\|U_\phi\|_3+\|\phi\|_1)
\end{eqnarray}
where we have used the estimates \eqref{Consistency}, \eqref{inequalityuhuI}, \eqref{H2 estimate}, and \eqref{rotuh} in the last inequality.
Combining $\eqref{priori estimate}$ and $\eqref{error equation}$, we derive the desired estimation $\eqref{3.9}$.
The estimate $\eqref{H1 estimate}$ is a direct consequence of $\eqref{Consistency}$ and $\eqref{3.9}$.
\end{proof}
\begin{remark}
For the case of $A=\text{diag}(\alpha_1,\alpha_2)$, where $\alpha_1$ and $\alpha_2$ are constants with same sign, we have
\begin{equation*}
  A:D^2u=\alpha_1 u_{xx}+\alpha_2 u_{yy}.
\end{equation*}
By using variable substitution $x=\sqrt{\frac{\alpha_1}{\alpha_2}}s,y=t$, we obtain that
\begin{equation*}
  u_{xx}=\frac{\alpha_2}{\alpha_1}u_{ss},u_{yy}=u_{tt},
\end{equation*}
therefore $A:D^2u=\alpha_2(u_{ss}+u_{tt})$, indicating that the problem can be reduced to the situation $A=\alpha_2 I$.
\end{remark}

\begin{thm}\label{thm:L2 error}
Let $u$ and $u_h$ be solutions of $\eqref{eqn:para}$ and $\eqref{discrete weak formulation}$, respectively. If the mesh is sufficiently regular such that
$\eqref{Poincare inequality}$ holds and $u\in H^3(\Omega)$, then there holds
\begin{equation}\label{3.10}
  \|u_h-u\|_0 \lesssim h^2\|u\|_3,
\end{equation}
\end{thm}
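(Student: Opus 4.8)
The plan is to obtain the $L^2$ estimate as a short consequence of the $H^1$-type bound \eqref{3.9} already proved in Theorem \ref{thm:H1 error}, together with the discrete Poincar\'{e} inequality \eqref{Poincare inequality}. The key point is that all the work has been done: once the recovered gradient of $u_h - u_I$ is controlled at order $h^2$, the discrete Poincar\'{e} inequality transfers this control to $u_h - u_I$ itself in the $L^2$ norm.

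First I would observe that $u_h \in V_h^{0,0} \subset V_h^0$ by construction of Scheme 1, and that the linear interpolant $u_I$ belongs to $V_h^0$ as well since $u = 0$ on $\partial\Omega$; hence $u_h - u_I \in V_h^0$. This is precisely the class of functions for which the regularity hypothesis on the mesh guarantees \eqref{Poincare inequality}. Applying that inequality with $i = 0$ to $v_h = u_h - u_I$ gives $\|u_h - u_I\|_0 \lesssim \|G_h(u_h - u_I)\|_0$. Invoking \eqref{3.9} from Theorem \ref{thm:H1 error}, which requires exactly the assumptions $u \in H^3(\Omega)$ and $A = \alpha I$ that are in force here, we get $\|G_h u_h - G_h u_I\|_0 \lesssim h^2 \|u\|_3$, and therefore $\|u_h - u_I\|_0 \lesssim h^2 \|u\|_3$.

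Finally I would close with a triangle inequality, $\|u_h - u\|_0 \le \|u_h - u_I\|_0 + \|u - u_I\|_0$, and bound the interpolation term by the standard linear-interpolation estimate $\|u - u_I\|_0 \lesssim h^2 \|u\|_2 \lesssim h^2 \|u\|_3$, which yields \eqref{3.10}. I do not expect any genuine obstacle: the only point requiring care is checking that $u_h - u_I$ lies in the space $V_h^0$ on which \eqref{Poincare inequality} is assumed to hold, and this is immediate. (If one wished to treat the diagonal case $A = \mathrm{diag}(\alpha_1,\alpha_2)$ as well, one would first reduce to $A = \alpha_2 I$ via the change of variables indicated in the remark following Theorem \ref{thm:H1 error}, and then argue as above.)
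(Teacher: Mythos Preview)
Your proposal is correct and follows essentially the same approach as the paper: apply the discrete Poincar\'{e} inequality \eqref{Poincare inequality} to $u_h-u_I\in V_h^0$, invoke \eqref{3.9} to control $\|G_h(u_h-u_I)\|_0$, and finish with a triangle inequality and the standard linear interpolation estimate. Your additional remarks (verifying $u_h-u_I\in V_h^0$ and noting the reduction in the diagonal case) are careful elaborations of points the paper leaves implicit.
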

\begin{proof}
By discrete Poincar\'{e} inequality $\eqref{Poincare inequality}$, we have
\begin{equation*}
  \|u_h-u_I\|_0\lesssim \|G_h(u_h-u_I)\|_0\lesssim h^2\|u\|_3.
\end{equation*}
Applying triangle inequality, we obtain
\begin{equation*}
  \|u-u_h\|_0\leq \|u-u_I\|_0+\|u_I-u_h\|_0\lesssim h^2\|u\|_3.
\end{equation*}
This completes the proof.
\end{proof}

\begin{remark}
The $H^1$ and $L^2$ error estimates are difficult for non-divergence elliptic equations in general, for which we have not yet found a theoretical proof in this article. Nonetheless, the $\mathcal{O}(h^2)$ order of convergence of GRBL and HRBL finite element schemes in $L^2$ norm can be confirmed by numerical experiments, even for problems with non-smooth and discontinuous coefficients.
\end{remark}

\section{Application to the Monge-Amp\`{e}re Equation}

In this section, we apply the recovered linear element method to solve the fully nonlinear Monge-Amp\`{e}re equation
\begin{eqnarray}\label{eqn:para6}
\left \{
\begin{array}{lll}
\text{det}(D^2u)&=&f \quad \mbox{in} \quad~\Omega,\\
\quad \quad \quad u&=&g  \quad \mbox{on} \quad \partial\Omega,
\end{array}
\right.
\end{eqnarray}
where $\Omega\subseteq \mathbb{R}^2$, and $D^2u$ is the Hessian of the function $u$. The Monge-Amp\`{e}re equation arises naturally from differential geometry and has widely applications in applied science such as mass transportation meteorology and geostrophic fluid dynamics.

If $f>0$, $\Omega$ and $u$ are convex, and $D^2u$ is positive definite, then problem \eqref{eqn:para6} admits a unique solution. Numerical approximation of the Monge-Amp\`{e}re equation is very challenging. Some numerical schemes in finite difference methods and/or FEMs have been designed for fully nonlinear equations in recent years,
see, e.g., \cite{Benamou,Brenner,Chen,Froese2,Froese3,Kawecki20192,Kawecki20193,Kawecki20194,Kawecki20195} and the references therein.

In this paper, we first use the efficient Newton's technology to linearize the Monge-Amp\`{e}re equation. Given $u_0\in V$, let $\{u_k\}_{k=1}^{\infty}\in V$ be a sequence, such that
\begin{eqnarray}\label{eqn:para5}
    \left \{
    \begin{array}{rll}
        \text{cof}(D^2u^{k-1}):D^2u^{k} & = f+\text{det}(D^2 u^{k-1}) & \mbox{in} \quad~\Omega,\\
        u^{k} & = g & \mbox{on} \quad \partial\Omega,
    \end{array}
    \right.
\end{eqnarray}
where the cofactor matrix of the Hessian $D^2u$ is defined as follows:
$$
 \begin{array}{ll}
\text{cof}~(D^2u)=\begin{pmatrix}
                u_{yy}  & -u_{yx}  \\
               -u_{xy}  &  u_{yy}\\
             \end{pmatrix}.
\end{array}
$$
For more details about the Newton's method, we refer to \cite{Lakkisand,Lakkis,Loeper}. It has been proved in \cite{Lakkis,Loeper,Pryer} that each iteration $u^k$ in the continuous Newton's scheme \eqref{eqn:para5} will be convex provided that the initial guess is strictly convex. From \cite{Dean,Lakkis}, a reasonable initial guess data is the solution of
\begin{eqnarray}\label{eqn:para7}
\left \{
\begin{array}{lll}
\Delta u^0&=&2\sqrt{f} \quad \mbox{in} \quad~\Omega,\\
\quad u^0&=&g  \quad~~~~ \mbox{on} \quad \partial\Omega.
\end{array}
\right.
\end{eqnarray}

Clearly, problem \eqref{eqn:para5} is an elliptic equation in non-divergence form. Define
$$V_h^{g}=\left \{v_h\in V_h: v_h|_{\partial \Omega}=g\right\}.$$
The gradient recovery linear element method for solving \eqref{eqn:para5} is to find $\{u_h^k\}_{k=1}^{\infty}\in V_h^{g}$ such that, $\forall v_h\in V_h$,
\begin{eqnarray}
    \nonumber
    && (\text{cof}~(DG_hu^{k-1}_h):DG_hu_h^k,\text{cof}~(DG_hu^{k-1}_h):DG_hv_h)+\sigma (\text{rot}(G_hu_h^k),\text{rot}(G_hv_h)) \\
    \label{eqn:weakform2}
    &=& (f+\text{det}(DG_h u^{k-1}_h),\text{cof}~(DG_hu^{k-1}_h):DG_hv_h),
\end{eqnarray}
where $D_h^2u_h$ is the weak Hessian of the linear function $u_h$ and $\sigma>0$ is a penalty parameter.

We will exhibit the robustness and convergence of the proposed algorithm in Section 6.2 by using numerical examples. We observe that the solution of the scheme \eqref{eqn:weakform2} can convergent to a convex solution.

\section{Numerical Experiments}
In this section, we present numerical results for some representative examples to confirm our theoretical findings. In all examples, uniform meshes are used. We apply PPR \eqref{gradint recovery formula} and \eqref{equ:hessian2} to generate $G_hu_h$ and $H_hu_h$ for HRBL and GRBL FEMs, respectively. We shall examine several numerical errors, which will be denoted in the following notations:
\begin{eqnarray*}
 L^2~\text{norm}&:& \|e \|_0 = \|u-u_h\|_{0,\Omega},\\
 H^1~\text{seminorm}&:& ~|e |_1=|u-u_h|_{1,\Omega}, \\
 \text{Recovered}~H^1~\text{seminorm}&:&~|e |_{1,r} = \|\nabla u-G_hu_h\|_{0,\Omega},\\
  H^2~\text{seminorm}&:&~|e |_2=\|D^2 u-D_h^2u_h\|_{0,\Omega}.
\end{eqnarray*}
where $D_h^2u_h=DG_hu_h$ for the scheme \eqref{discrete weak formulation} and $D_h^2u_h=H_hu_h$ for the scheme \eqref{discrete weak formulation 2}.

\subsection{Numerical Experiments of Non-divergence Form Elliptic Equations}

We first consider four examples of second-order linear elliptic PDEs, including examples with non-smooth and/or discontinuous coefficients over convex domains (cf. Figure~\ref{fig:msh}) or $L$-shaped domain.
\begin{figure}[!h]
    \centering
    {\includegraphics[width=0.35\textwidth]{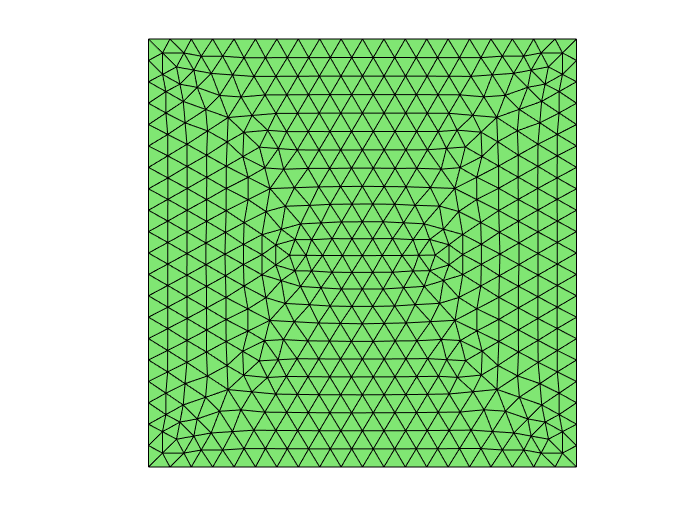}}
    {\includegraphics[width=0.35\textwidth]{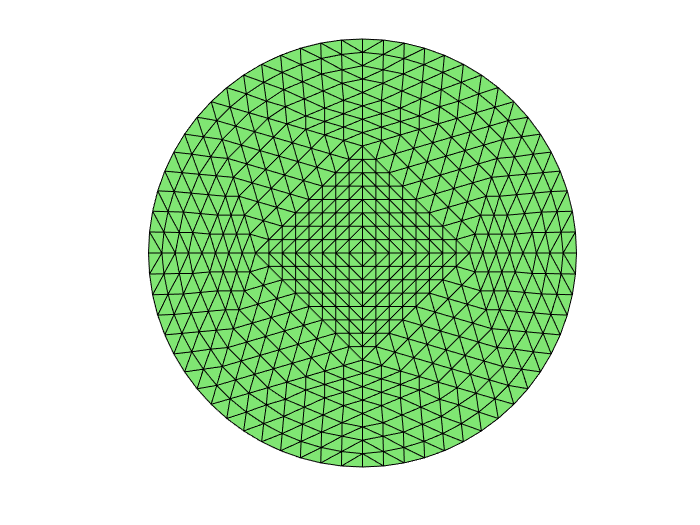}}
    \caption{(a) A nonuniform mesh (b) A mesh on circular area }
    \label{fig:msh}
\end{figure}

\begin{ex}\label{ex:problem 1} A problem with non-smooth coefficients.
\end{ex}
We consider numerical approximation for the problem $\eqref{eqn:para}$ with exact solution $u=\sin x \sin y$. The coefficient function is
\begin{equation*}
    A=\begin{pmatrix}
        1+ |x|           & 0.5 |xy|^{1/3} \\
        0.5 |xy|^{1/3}  & 1+ |y|\\
    \end{pmatrix},
\end{equation*}
which is continuous. This elliptic problem cannot be written in a divergence form, because the two off diagonal entries have a singularity at the origin.

\begin{table}[!h]
\centering
\small{\caption{\emph{Example $\ref{ex:problem 1}$ -- Numerical results of the scheme \eqref{discrete weak formulation} on uniform meshes}}}
\label{Tab:Eg1a}
\begin{tabular}{lllllllllll}
\hline
 $~1/h $ &~~$\|e\|_0$   &order  &~~$|e|_1$  &order  &~~$|e|_{1,r}$&order  &~~$|e|_2$  &order\\
\hline
 $~~16$ & 2.19E-3 &~      &1.09E-1  &$~$     &1.04E-2   &$~$    &1.15E-2   &$~$\\
 $~~32$ & 5.37E-4 &~2.03  &5.44E-1  &$~1.01$ &2.56E-3   &$~2.01$&5.63E-2   &$~1.03$\\
 $~~64$ & 1.33E-4 &~2.01  &2.72E-2  &$~1.00$ &6.35E-4   &$~2.01$&2.79E-2   &$~1.01$\\
 $~128$ & 3.31E-5 &~2.00  &1.36E-2  &$~1.00$ &1.58E-4   &$~2.01$&1.39E-2   &$~1.00$\\
 $~256$ & 8.26E-6 &~2.00  &6.79E-3  &$~1.00$ &3.94E-5   &$~2.00$&6.96E-3   &$~1.00$\\
 $~512$ & 2.06E-6 &~2.00  &3.39E-3  &$~1.00$ &9.82E-6   &$~2.00$&3.48E-3   &$~1.00$\\
\hline
\end{tabular}
\end{table}

\begin{table}[!h]
\centering
\small{\caption{\emph{Example $\ref{ex:problem 1}$ -- Numerical results of the scheme \eqref{discrete weak formulation} on nonuniform meshes}}}
\label{Tab:Eg1b}
\begin{tabular}{lllllllllll}
\hline
 $~~n $ &~~$\|e\|_0$   &order  &~~$|e|_1$  &order  &~~$|e|_{1,r}$&order  &~~$|e|_2$  &order\\
\hline
 $~~2$ & 1.00E-3 &~      &6.17E-2  &$~$     &5.28E-2   &$~$    &6.80E-1   &$~$\\
 $~~3$ & 2.46E-4 &~2.02  &3.08E-2  &$~1.00$ &1.28E-3   &$~2.04$&3.29E-1   &$~1.05$\\
 $~~4$ & 6.07E-5 &~2.02  &1.54E-2  &$~1.00$ &3.11E-4   &$~2.04$&1.61E-1   &$~1.03$\\
 $~~5$ & 1.50E-5 &~2.02  &7.69E-3  &$~1.00$ &7.68E-5   &$~2.02$&8.00E-2   &$~1.01$\\
\hline
\end{tabular}
\end{table}

\begin{table}[!h]
\centering
\small{\caption{\emph{Example $\ref{ex:problem 1}$ -- Numerical results of the scheme \eqref{discrete weak formulation 2} on uniform meshes}}}
\label{Tab:Eg1c}
\begin{tabular}{lllllllllll}
\hline
 $~1/h $ &~~$\|e\|_0$   &order  &~~$|e|_1$  &order  &~~$|e|_{1,r}$&order  &~~$|e|_2$  &order\\
\hline
 $~~16$ & 2.20E-3 &~      &1.09E-1  &$~$     &1.05E-2   &$~$    &6.56E-2   &$~$\\
 $~~32$ & 5.47E-4 &~2.03  &5.43E-1  &$~1.01$ &2.60E-3   &$~2.01$&2.34E-2   &$~1.49$\\
 $~~64$ & 1.36E-4 &~2.01  &2.72E-2  &$~1.00$ &6.43E-4   &$~2.01$&8.33E-3   &$~1.49$\\
 $~128$ & 3.39E-5 &~2.00  &1.36E-2  &$~1.00$ &1.60E-4   &$~2.01$&2.95E-3   &$~1.49$\\
 $~256$ & 8.48E-6 &~2.00  &6.79E-3  &$~1.00$ &3.98E-5   &$~2.00$&1.05E-3   &$~1.50$\\
 $~512$ & 2.12E-6 &~2.00  &3.39E-3  &$~1.00$ &9.82E-6   &$~2.00$&3.74E-4   &$~1.50$\\
\hline
\end{tabular}
\end{table}

\begin{table}[!h]
\centering
\small{\caption{\emph{Example $\ref{ex:problem 1}$ -- Numerical results of the scheme \eqref{discrete weak formulation 2} on nonuniform meshes}}}
\label{Tab:Eg1d}
\begin{tabular}{lllllllllll}
\hline
 $~~n $ &~~$\|e\|_0$   &order  &~~$|e|_1$  &order  &~~$|e|_{1,r}$&order  &~~$|e|_2$  &order\\
\hline
 $~~2$ & 9.63E-3 &~      &6.15E-2  &$~$     &5.37E-2   &$~$    &2.65E-2   &$~$\\
 $~~3$ & 2.42E-4 &~2.02  &3.08E-2  &$~1.00$ &1.29E-3   &$~2.04$&9.37E-2   &$~1.50$\\
 $~~4$ & 6.04E-5 &~2.02  &1.54E-2  &$~1.00$ &3.15E-4   &$~2.04$&3.32E-2   &$~1.50$\\
 $~~5$ & 1.50E-5 &~2.02  &7.69E-3  &$~1.00$ &7.71E-5   &$~2.02$&1.18E-2   &$~1.50$\\
\hline
\end{tabular}
\end{table}

\begin{table}[!h]
\centering
\small{\caption{\emph{Example $\ref{ex:problem 1}$ -- Numerical results  of the scheme \eqref{discrete weak formulation} on a $L$-Shaped domain}}}
\label{Tab:Eg1e}
\begin{tabular}{lllllllllll}
\hline
$~1/h $ &~~$\|e\|_0$   &order  &~~$|e|_1$  &order  &~~$|e|_{1,r}$&order  &~~$|e|_2$  &order\\
\hline
 $~~16$ & 2.33E-3 &~      &9.79E-1  &$~$     &8.53E-3   &$~$    &9.37E-2   &$~$\\
 $~~32$ & 5.76E-4 &~2.02  &4.88E-2  &$~1.01$ &2.11E-3   &$~2.01$&4.55E-2   &$~1.04$\\
 $~~64$ & 1.42E-4 &~2.01  &2.44E-2  &$~1.00$ &5.23E-4   &$~2.01$&2.25E-2   &$~1.02$\\
 $~128$ & 3.55E-5 &~2.00  &1.22E-2  &$~1.00$ &1.30E-4   &$~2.01$&1.12E-2   &$~1.01$\\
 $~256$ & 8.85E-6 &~2.00  &6.09E-3  &$~1.00$ &3.23E-5   &$~2.01$&5.59E-3   &$~1.00$\\
 $~512$ & 2.23E-6 &~2.00  &3.05E-3  &$~1.00$ &8.05E-6   &$~2.00$&2.79E-3   &$~1.00$\\
\hline
\end{tabular}
\end{table}

\begin{table}[!h]
\centering
\small{\caption{\emph{Example $\ref{ex:problem 1}$ -- Numerical results  of the scheme \eqref{discrete weak formulation 2} on a $L$-Shaped domain}}}
\label{Tab:Eg1f}
\begin{tabular}{lllllllllll}
\hline
$~1/h $ &~~$\|e\|_0$   &order  &~~$|e|_1$  &order  &~~$|e|_{1,r}$&order  &~~$|e|_2$  &order\\
\hline
 $~~16$ & 2.40E-3 &~      &9.75E-1  &$~$     &8.77E-3   &$~$    &5.49E-2   &$~$\\
 $~~32$ & 5.84E-4 &~1.98  &4.87E-2  &$~1.01$ &2.16E-3   &$~1.95$&1.98E-2   &$~1.11$\\
 $~~64$ & 1.44E-4 &~1.98  &2.44E-2  &$~1.00$ &5.30E-4   &$~2.01$&7.06E-3   &$~1.07$\\
 $~128$ & 3.58E-5 &~1.98  &1.22E-2  &$~1.00$ &1.31E-4   &$~2.01$&2.51E-3   &$~1.03$\\
 $~256$ & 8.95E-6 &~1.99  &6.09E-3  &$~1.00$ &3.24E-5   &$~2.00$&8.88E-4   &$~1.01$\\
\hline
\end{tabular}
\end{table}

\begin{table}[!h]
\centering
\small{\caption{\emph{Example $\ref{ex:problem 1}$ -- Numerical results  of the scheme \eqref{discrete weak formulation} on a circular area}}}
\label{Tab:Eg1g}
\begin{tabular}{lllllllllll}
\hline
$~1/h $ &~~$\|e\|_0$   &order  &~~$|e|_1$  &order  &~~$|e|_{1,r}$&order  &~~$|e|_2$  &order\\
\hline
 $~~16$ & 2.46E-3 &~      &1.07E-1  &$~$     &1.38E-3   &$~$    &1.23E-1   &$~$\\
 $~~32$ & 6.26E-4 &~2.02  &5.34E-2  &$~1.01$ &3.57E-3   &$~2.01$&5.70E-2   &$~1.11$\\
 $~~64$ & 1.59E-4 &~2.01  &2.67E-2  &$~1.00$ &8.88E-4   &$~2.01$&2.72E-2   &$~1.07$\\
 $~128$ & 4.02E-5 &~2.00  &1.33E-2  &$~1.00$ &2.21E-4   &$~2.01$&1.33E-2   &$~1.03$\\
 $~256$ & 1.01E-5 &~2.00  &6.65E-3  &$~1.00$ &5.51E-5   &$~2.01$&6.60E-3   &$~1.01$\\
\hline
\end{tabular}
\end{table}

\begin{table}[!h]
\centering
\small{\caption{\emph{Example $\ref{ex:problem 1}$ -- Numerical results  of the scheme \eqref{discrete weak formulation 2} on a circular area}}}
\label{Tab:Eg1r}
\begin{tabular}{lllllllllll}
\hline
$~1/h $ &~~$\|e\|_0$   &order  &~~$|e|_1$  &order  &~~$|e|_{1,r}$&order  &~~$|e|_2$  &order\\
\hline
 $~~16$ & 2.35E-3 &~      &1.07E-1  &$~$     &1.29E-3   &$~$    &7.12E-1   &$~$\\
 $~~32$ & 5.89E-4 &~2.00  &5.32E-2  &$~1.01$ &3.42E-3   &$~1.91$&2.67E-2   &$~1.42$\\
 $~~64$ & 1.51E-4 &~1.96  &2.66E-2  &$~1.00$ &8.71E-4   &$~1.97$&9.70E-2   &$~1.46$\\
 $~128$ & 3.84E-5 &~1.97  &1.33E-2  &$~1.00$ &2.19E-4   &$~1.99$&3.47E-2   &$~1.48$\\
 $~256$ & 9.71E-5 &~1.98  &6.66E-3  &$~1.00$ &5.50E-5   &$~2.00$&1.23E-3   &$~1.49$\\
\hline
\end{tabular}
\end{table}

Numerical results of the recovered linear finite element approximation for problem $\eqref{eqn:para}$ on a square $\Omega=(-1,1)^2$, an $L$-shaped domain $\Omega=(-1,1)^2 \setminus (0,1)^2$ and a circular domain $\Omega=\{(x,y)|x^2+y^2\leq 1\}$ are collected in Tables~\ref{Tab:Eg1a}-6.8. For the GRBL FEM \eqref{discrete weak formulation} on both uniform and unstructured meshes (see Figure~\ref{fig:msh}), the convergence orders of numerical errors in $L^2$- and $H^1$- norms are $\mathcal{O}(h^2)$ and $\mathcal{O}(h)$, respectively, which are both optimal. Superconvergence phenomenon is also observed. In particular, $DG_hu_h$ and $G_hu_h$ converge to the $D^2u$ and $\nabla u$ with convergence rates $\mathcal{O}(h)$ and $\mathcal{O}(h^2)$, respectively. As for the HRBL FEM \eqref{discrete weak formulation 2}, the convergence rates of numerical errors in $L^2$-, $H^1$-, and recovered gradient norms are the same as those from the scheme \eqref{discrete weak formulation}; but the convergence rate in $H^2$-seminorm is $\mathcal{O} (h^{1.5})$, which is half order higher than that of the first scheme \eqref{discrete weak formulation}. It can also be concluded from Tables~6.7-6.8 that our proposed methods maintain optimal convergence rates over a circular domain. Moreover, numerical we observe that the convergence order under $H^2$-seminorm error obtained by the scheme (3.6) is 1.5 for the convex domain, but it reduces to 1.0 for non-convex domains.

\begin{ex}\label{ex:problem 2} A problem with discontinuous coefficients.
\end{ex}
In this example, we consider the test problem in \cite{Gallistl,Smears,Wang}. The coefficient reads
\begin{equation*}
    A=\begin{pmatrix}
        2 & xy/|xy| \\
        xy/|xy| & 2 \\
    \end{pmatrix},
\end{equation*}
and the function $f$ is chosen such that the exact solution of $\eqref{eqn:para}$ is
$$u(x,y)=xy(1-\exp(1-|x|))(1-\exp(1-|y|)).$$
Note that the coefficient is discontinuous across the $x$- and $y$-axis. It is straightforward to verify that example $\ref{ex:problem 2}$ satisfies Cordes condition with $\epsilon=3/5$. Tables~6.9-6.11 demonstrate the performance of the GRBL FEM for the test problem over a square domain, a $L$-shaped domain, and a circular domain, respectively, as specified in Example~\ref{ex:problem 1}. Numerical results illustrate that the convergence rates over these domains are all optimal for problems with discontinuous coefficients, which are $\mathcal{O}(h)$ for the approximation of the Hessian, $\mathcal{O}(h)$ for the approximation of the gradient in $H^1$ norm, and $\mathcal{O}(h^2)$ for the approximation of exact solution in $L^2$ norm. It worths mentioning that the convergence order of the WG method in $L^2$ norm \cite{Wang} and the internal penalty method in $L^2$ norm \cite{Mu} are both $\mathcal{O}(h^2)$ when quadratic elements are employed.

\begin{table}[!h]
\centering
\small{\caption{\emph{Example $\ref{ex:problem 2}$ -- Numerical results of the scheme \eqref{discrete weak formulation} on a square domain}}}
\label{Tab:Eg2a}
\begin{tabular}{lllllllllll}
\hline
 $~1/h $ &~~$\|e\|_0$   &order  &~~$|e|_1$  &order  &~~$|e|_{1,r}$&order  &~~$|e|_2$  &order\\
\hline
 $~~16$ & 7.71E-3 &~      &1.94E-1  &$~$     &2.25E-2   &$~$    &6.41E-1   &$~$\\
 $~~32$ & 1.90E-3 &~2.02  &9.33E-2  &$~1.05$ &5.73E-3   &$~1.97$&3.14E-1   &$~1.03$\\
 $~~64$ & 4.81E-4 &~1.98  &4.66E-2  &$~1.00$ &1.46E-3   &$~1.97$&1.55E-1   &$~1.02$\\
 $~128$ & 1.21E-4 &~1.99  &2.32E-2  &$~1.01$ &3.69E-4   &$~1.98$&7.72E-2   &$~1.01$\\
 $~256$ & 3.02E-5 &~2.00  &1.15E-2  &$~1.01$ &9.31E-5   &$~1.99$&3.83E-2   &$~1.01$\\
 $~512$ & 7.55E-6 &~2.00  &5.07E-3  &$~1.00$ &2.33E-5   &$~1.99$&1.90E-2   &$~1.01$\\
\hline
\end{tabular}
\end{table}

\begin{table}[!h]
\centering
\small{\caption{\emph{Example $\ref{ex:problem 2}$ -- Numerical results of the scheme \eqref{discrete weak formulation} on a $L$-Shaped domain}}}
\label{Tab:Eg2b}
\begin{tabular}{lllllllllll}
\hline
$~1/h $ &~~$\|e\|_0$   &order  &~~$|e|_1$  &order  &~~$|e|_{1,r}$&order  &~~$|e|_2$  &order\\
\hline
 $~~16$ & 8.03E-3 &~      &1.65E-1  &$~$     &2.75E-2   &$~$    &5.50E-1   &$~$\\
 $~~32$ & 2.44E-3 &~1.72  &7.49E-2  &$~1.14$ &8.09E-3   &$~1.77$&2.63E-1   &$~1.06$\\
 $~~64$ & 6.87E-4 &~1.83  &4.69E-2  &$~1.02$ &2.22E-3   &$~1.86$&1.29E-1   &$~1.03$\\
 $~128$ & 1.81E-4 &~1.93  &1.82E-2  &$~1.02$ &5.83E-4   &$~1.94$&6.38E-2   &$~1.01$\\
 $~256$ & 4.51E-5 &~2.00  &9.01E-3  &$~1.02$ &1.47E-4   &$~1.99$&3.19E-2   &$~1.00$\\
 $~512$ & 1.13E-5 &~2.00  &4.45E-3  &$~1.02$ &3.67E-5   &$~2.00$&1.60E-2   &$~1.00$\\
\hline
\end{tabular}
\end{table}

\begin{table}[!h]
\centering
\small{\caption{\emph{Example $\ref{ex:problem 2}$ -- Numerical results of the scheme \eqref{discrete weak formulation} on a circular area}}}
\label{Tab:Eg2c}
\begin{tabular}{lllllllllll}
\hline
$~1/h $ &~~$\|e\|_0$   &order  &~~$|e|_1$  &order  &~~$|e|_{1,r}$&order  &~~$|e|_2$  &order\\
\hline
 $~~16$ & 6.65E-3 &~      &1.74E-1  &$~$     &4.95E-2   &$~$    &8.32E-1   &$~$\\
 $~~32$ & 1.91E-3 &~1.80  &9.17E-2  &$~0.92$ &1.14E-2   &$~2.11$&4.13E-1   &$~1.01$\\
 $~~64$ & 4.86E-4 &~1.98  &4.58E-2  &$~1.00$ &2.73E-3   &$~2.07$&2.06E-1   &$~1.00$\\
 $~128$ & 1.18E-4 &~2.04  &2.23E-2  &$~1.04$ &6.73E-4   &$~2.02$&1.04E-1   &$~0.98$\\
 $~256$ & 2.89E-5 &~2.03  &1.09E-2  &$~1.03$ &1.69E-4   &$~1.99$&5.35E-2   &$~0.97$\\
\hline
\end{tabular}
\end{table}

\begin{ex}\label{ex:problem 3} A problem with a singular solution.
\end{ex}
In this example, we consider the problem suggested in \cite{Wang,Smears}. The test equation is given by
\begin{equation}\label{eqn:problem 3}
   \sum_{i,j=1}^2(\delta_{i,j}+\frac{x_ix_j}{|{\bf x}|^2})\partial_{ij}^2 u=f \quad \text{in}~\Omega,
\end{equation}
where $\delta_{i,j}$ is the Kronecker delta and ${\bf x} = (x_1,x_2)$. For $\alpha>1$, it is straightforward to confirm that $u=|{\bf x}|^\alpha\in H^2(\Omega)$ satisfies $\eqref{eqn:problem 3}$ with $f=(2\alpha^2-\alpha)|{\bf x}|^{\alpha-2}$. In fact, the solution $u \in H^{1+\alpha-\tau}(\Omega)$ for arbitrarily small $\tau>0$. Moreover, the coefficient satisfies the Cordes condition with $\epsilon=4/5$. In the numerical experiments, we take $\alpha=1.6$ with problem $\eqref{eqn:problem 3}$ defined on two square domains $(0,1)^2$ and $(-1,1)^2$.

Results in Tables~6.12-6.15 are from the computational domain $\Omega=(0,1)^2$, for which the coefficient matrix is discontinuous at the origin. Numerical results suggest a convergence rate of $\mathcal{O}(h^{0.6})$ in the $H^2$ seminorm, which is consistent with the estimate \eqref{H2 estimate}. The convergence rates in $L^2$ norm and $H^1$ seminorm are of $\mathcal{O}(h^{2})$ and $\mathcal{O}(h)$, respectively. The recovered numerical gradient has a superconvergence order of $\mathcal{O}(h^{1.6})$. Tables~6.16 and 6.17 display the performance of the recovered linear finite element schemes for $\eqref{eqn:problem 3}$ on the domain $\Omega=(-1,1)^2$. Due to the discontinuity of the coefficient matrix at the origin, the convergence rates of numerical results are reduced. In particular, the numerical results suggest a convergence rate of $\mathcal{O}(h^{0.6})$ in the $H^2$ seminorm. The convergence rates in both recovered gradient norm and $L^2$ norm are of $\mathcal{O}(h^{1.1})$, which is consistent with the numerical results reported in \cite{Wang}.

\begin{table}[!h]
\centering
\small{\caption{\emph{Example~\ref{ex:problem 3} -- Numerical results of the scheme \eqref{discrete weak formulation} on uniform meshes}}}
\label{Tab:Eg3a}
\begin{tabular}{lllllllllll}
\hline
 $~1/h $ &~~$\|e\|_0$   &order  &~~$|e|_1$  &order  &~~$|e|_{1,r}$&order  &~~$|e|_2$  &order\\
\hline
 $~~32$ & 3.15E-4 &1.93  &1.92E-2 &$1.07$ &1.45E-3 &$1.58$&1.33E-1 &$~0.60$\\
 $~~64$ & 8.02E-5 &1.98  &9.23E-3 &$1.05$ &4.81E-4 &$1.59$&8.76E-2 &$~0.60$\\
 $~128$ & 1.98E-5 &2.02  &4.52E-3 &$1.03$ &1.59E-4 &$1.60$&5.78E-2 &$~0.60$\\
 $~256$ & 4.69E-6 &2.07  &2.23E-3 &$1.02$ &5.26E-5 &$1.60$&3.82E-2 &$~0.60$\\
 $~512$ & 1.06E-6 &2.14  &1.11E-3 &$1.01$ &1.74E-5 &$1.60$&2.52E-2 &$~0.60$\\
\hline
\end{tabular}
\end{table}

\begin{table}[!h]
\centering
\small{\caption{\emph{Example~\ref{ex:problem 3} -- Numerical results of the scheme \eqref{discrete weak formulation} on nonuniform meshes}}}
\label{Tab:Eg3b}
\begin{tabular}{lllllllllll}
\hline
 $  n $ &~~$\|e\|_0$   &order  &~~$|e|_1$  &order  &~~$|e|_{1,r}$&order  &~~$|e|_2$  &order\\
\hline
 $2$ & 6.29E-4 &      &2.76E-2 &       &2.67E-3 &      &1.49E-1 &       \\
 $3$ & 1.70E-4 &1.89  &1.36E-2 &$1.02$ &9.22E-4 &$1.53$&1.03E-1 &$~0.53$\\
 $4$ & 4.50E-5 &1.92  &6.59E-3 &$1.05$ &2.89E-4 &$1.67$&6.81E-2 &$~0.59$\\
 $5$ & 1.20E-5 &1.90  &3.21E-3 &$1.04$ &9.43E-5 &$1.62$&4.43E-2 &$~0.62$\\
\hline
\end{tabular}
\end{table}

\begin{table}[!h]
\centering
\small{\caption{\emph{Example~\ref{ex:problem 3} -- Numerical results of the scheme \eqref{discrete weak formulation 2} on uniform meshes}}}
\label{Tab:Eg3c}
\begin{tabular}{lllllllllll}
\hline
 $~1/h $ &~~$\|e\|_0$   &order  &~~$|e|_1$  &order  &~~$|e|_{1,r}$&order  &~~$|e|_2$  &order\\
\hline
 $~~32$ & 2.63E-4 &      &1.75E-2 &       &1.41E-3 &      &1.10E-1 &       \\
 $~~64$ & 6.61E-5 &1.99  &8.74E-3 &$1.00$ &4.68E-4 &$1.59$&7.23E-2 &$~0.60$\\
 $~128$ & 1.58E-5 &2.06  &4.37E-3 &$1.00$ &1.56E-4 &$1.59$&4.77E-2 &$~0.60$\\
 $~256$ & 3.75E-6 &2.07  &2.18E-3 &$1.00$ &5.18E-5 &$1.59$&3.15E-2 &$~0.60$\\
 $~512$ & 8.83E-6 &2.08  &1.09E-3 &$1.01$ &1.72E-5 &$1.60$&2.08E-2 &$~0.60$\\
\hline
\end{tabular}
\end{table}

\begin{table}[!h]
\centering
\small{\caption{\emph{Example~\ref{ex:problem 3} -- Numerical results of the scheme \eqref{discrete weak formulation 2} on nonuniform meshes}}}
\label{Tab:Eg3d}
\begin{tabular}{lllllllllll}
\hline
 $  n $ &~~$\|e\|_0$   &order  &~~$|e|_1$  &order  &~~$|e|_{1,r}$&order  &~~$|e|_2$  &order\\
\hline
 $2$ & 5.39E-4 &      &2.43E-2 &       &2.53E-3 &      &1.27E-1 &       \\
 $3$ & 1.46E-4 &1.90  &1.21E-2 &$1.01$ &8.14E-4 &$1.63$&8.42E-1 &$~0.59$\\
 $4$ & 3.75E-5 &1.96  &6.00E-3 &$1.00$ &2.69E-4 &$1.60$&5.55E-2 &$~0.60$\\
 $5$ & 9.73E-5 &1.94  &3.00E-3 &$1.00$ &8.90E-5 &$1.60$&3.66E-2 &$~0.60$\\
\hline
\end{tabular}
\end{table}

\begin{table}[!h]
\centering
\small{\caption{\emph{Example~\ref{ex:problem 3} -- Numerical results of the scheme \eqref{discrete weak formulation}}}}
\label{Tab:Eg3e}
\begin{tabular}{lllllllllll}
\hline
$~1/h $ &~~$\|e\|_0$   &order  &~~$|e|_1$  &order  &~~$|e|_{1,r}$&order  &~~$|e|_2$  &order\\
\hline
 $~~32$ & 8.63E-3 &~     &8.52E-2 &$~   $ &2.21E-2 &$~   $&3.42E-1 &$~    $\\
 $~~64$ & 3.99E-3 &1.11  &4.27E-2 &$1.00$ &1.02E-3 &$1.11$&2.27E-1 &$~0.59$\\
 $~128$ & 1.85E-3 &1.10  &2.14E-2 &$1.00$ &4.75E-3 &$1.11$&1.51E-1 &$~0.59$\\
 $~256$ & 8.65E-4 &1.10  &1.07E-2 &$1.00$ &2.20E-3 &$1.11$&9.96E-2 &$~0.60$\\
 $~512$ & 4.03E-4 &1.10  &5.04E-3 &$1.00$ &1.02E-3 &$1.11$&6.58E-2 &$~0.60$\\
\hline
\end{tabular}
\end{table}

\begin{table}[!h]
\centering
\small{\caption{\emph{Example~\ref{ex:problem 3} -- Numerical results of the scheme \eqref{discrete weak formulation 2}}}}
\label{Tab:Eg3f}
\begin{tabular}{lllllllllll}
\hline
$~1/h $ &~~$\|e\|_0$   &order  &~~$|e|_1$  &order  &~~$|e|_{1,r}$&order  &~~$|e|_2$  &order\\
\hline
 $~~32$ & 6.29E-3 &~     &8.44E-2 &$~   $ &1.67E-2 &$~   $&2.05E-1 &$~    $\\
 $~~64$ & 2.79E-3 &1.17  &4.23E-2 &$1.00$ &7.36E-3 &$1.18$&1.35E-1 &$~0.60$\\
 $~128$ & 1.27E-3 &1.14  &2.12E-2 &$1.00$ &3.31E-3 &$1.15$&8.90E-2 &$~0.60$\\
 $~256$ & 5.85E-4 &1.12  &1.06E-2 &$1.00$ &1.52E-3 &$1.12$&5.87E-2 &$~0.60$\\
 $~512$ & 2.70E-4 &1.11  &5.03E-3 &$1.00$ &7.03E-4 &$1.11$&3.87E-2 &$~0.60$\\
\hline
\end{tabular}
\end{table}

\begin{ex}\label{ex:problem 4} A problem with degenerate coefficients.
\end{ex}
In this example, we consider the problem \eqref{eqn:para} with degenerate coefficients suggested in \cite{Feng}. The coefficient reads
\begin{equation*}
    A=\frac{16}{9}\begin{pmatrix}
        x^{2/3} & -x^{1/3}y^{1/3} \\
        -x^{1/3}y^{1/3} & y^{2/3} \\
    \end{pmatrix}.
\end{equation*}
The exact solution of this problem is set as $u=x^{4/3} - y^{4/3}$. We take $\Omega=(0,1)^2$. Note that $A:D^2 u =0$. Unlike the first three example problems, this problem is not uniformly elliptic as $\det(A) \equiv 0$ in $\Omega$. Therefore, the error estimates developed in this paper are not applicable. Nevertheless, numerical results by the GRBL FEM are presented in Table~6.18 and 6.19. The experiment illustrates that $u-u_h$ measured in the $L^2$ norm and the $H^1$ seminorm have convergence rates of $\mathcal{O}(h^{1.27})$ and $\mathcal{O}(h^{0.78})$, respectively, for the GRLEM; and are of $\mathcal{O}(h^{1.36})$ and $\mathcal{O}(h^{0.83})$, respectively, for the HRLEM. These rates are competitive to the numerical results reported in \cite{Feng}.

\begin{table}[!h]
\centering
\small{\caption{\emph{Example~\ref{ex:problem 4} -- Numerical results of the scheme \eqref{discrete weak formulation}}}}
\label{Tab:Eg4a}
\begin{tabular}{lllllllllll}
\hline
 $~1/h $ &~~$\|e\|_0$   &order  &~~$|e|_1$  &order  &~~$|e|_{1,r}$&order \\
\hline
 $~~32$ & 3.99E-4 &1.24  &2.58E-2 &$0.77$ &1.04E-2 &$0.88$\\
 $~~64$ & 1.72E-4 &1.22  &1.51E-2 &$0.77$ &5.69E-3 &$0.87$\\
 $~128$ & 7.27E-5 &1.24  &8.81E-3 &$0.78$ &3.10E-3 &$0.87$\\
 $~256$ & 3.00E-5 &1.27  &5.13E-3 &$0.78$ &1.70E-3 &$0.87$\\
 $~512$ & 1.24E-5 &1.27  &2.99E-3 &$0.78$ &9.31E-4 &$0.87$\\
\hline
\end{tabular}
\end{table}
\begin{table}[!h]
\centering
\small{\caption{\emph{Example~\ref{ex:problem 4} -- Numerical results of the scheme \eqref{discrete weak formulation 2}}}}
\label{Tab:Eg4b}
\begin{tabular}{lllllllllll}
\hline
 $~1/h $ &~~$\|e\|_0$   &order  &~~$|e|_1$  &order  &~~$|e|_{1,r}$&order \\
\hline
 $~~32$ & 6.00E-4 &      &3.39E-2 &       &2.10E-2 & \\
 $~~64$ & 2.40E-4 &1.32  &1.96E-2 &$0.79$ &1.28E-2 &$0.72$\\
 $~128$ & 9.41E-5 &1.35  &1.12E-2 &$0.81$ &7.46E-3 &$0.77$\\
 $~256$ & 3.67E-5 &1.36  &6.36E-3 &$0.82$ &4.28E-3 &$0.80$\\
 $~512$ & 1.43E-5 &1.36  &3.57E-3 &$0.83$ &2.40E-4 &$0.83$\\
\hline
\end{tabular}
\end{table}
\begin{ex}\label{ex:problem 15} A 3D problem with non-smooth coefficients.
\end{ex}
We extend our proposed method to solve a 3D problem on cuboid meshes with exact solution $u=\sin \pi x \sin \pi y\sin \pi z$. The coefficient function is
\begin{equation*}
    A=\begin{pmatrix}
        1+ |x|           & 0.5 |xy|^{1/3} & 0.5 |xz|^{1/3}\\
        0.5 |xy|^{1/3}   & 1+ |y|         & 0.5 |yz|^{1/3}\\
        0.5 |xz|^{1/3}   & 0.5 |yz|^{1/3} & 1+ |z|         \\
    \end{pmatrix},
\end{equation*}
which is continuous but has singularities at the origin. We take $\Omega=(-1,1)^3$.

Numerical results by the GRBL FEM and HRBL FEM are presented in Table~6.20 and 6.21. The experiment illustrates that $u-u_h$ measured in the $L^2$ norm and the $H^1$ seminorm have convergence rates of $\mathcal{O}(h^{2})$ and $\mathcal{O}(h^{1})$ for both GRBL FEM and HRBL FEM, which are optimal. Superconvergence phenomenon is also observed. For GRBL FEM, $DG_hu_h$ and $G_hu_h$ converge to the $D^2u$ and $\nabla u$ with convergence rates $\mathcal{O}(h)$ and $\mathcal{O}(h^2)$, respectively. As for the HRBL FEM, the convergence rate of numerical errors in recovered gradient norms are the same as that from the scheme \eqref{discrete weak formulation}; but the convergence rate in $H^2$-seminorm is nearly $\mathcal{O} (h^{2})$, which is one order higher than that of the first scheme \eqref{discrete weak formulation}.

\begin{table}[!h]
\centering
\small{\caption{\emph{{Example $\ref{ex:problem 15}$ -- Numerical results of the scheme \eqref{discrete weak formulation} on cuboid meshes}}}}
\begin{tabular}{lllllllllll}
\hline
 $~\text{DOFs} $ &~~$\|e\|_0$   &order  &~~$|e|_1$  &order  &~~$|e|_{1,r}$&order  &~~$|e|_2$  &order\\
\hline
 $~~~585$ & 1.23E-0 &~      &4.52E-0  &$~$     &3.98E-0   &$~$    &6.47E-0   &$~$\\
 $~~3825$ & 2.56E-1 &~2.26  &2.23E-1  &$~1.03$ &9.71E-1   &$~2.04$&3.05E-0   &$~1.09$\\
 $~27489$ & 6.14E-2 &~2.06  &1.11E-1  &$~1.00$ &2.42E-1   &$~2.01$&1.52E-0   &$~1.01$\\
 $208065$ & 1.53E-2 &~2.00  &5.50E-1  &$~1.00$ &6.05E-2   &$~2.00$&7.59E-1   &$~1.00$\\
\hline
\end{tabular}
\end{table}

\begin{table}[!h]
\centering
\small{\caption{\emph{Example $\ref{ex:problem 15}$ -- Numerical results of the scheme \eqref{discrete weak formulation 2} on cuboid meshes}}}
\begin{tabular}{lllllllllll}
\hline
 $~\text{DOFs} $ &~~$\|e\|_0$   &order  &~~$|e|_1$  &order  &~~$|e|_{1,r}$&order  &~~$|e|_2$  &order\\
\hline
 $~~~585$ & 7.37E-1 &~      &3.24E-0  &$~$     &2.98E-0   &$~$    &4.29E-0   &$~$\\
 $~~3825$ & 9.71E-2 &~2.58  &1.63E-1  &$~1.01$ &4.82E-1   &$~2.13$&9.75E-1   &$~2.13$\\
 $~27489$ & 2.04E-2 &~2.12  &8.01E-1  &$~1.00$  &1.11E-1   &$~2.01$&3.33E-1   &$~1.54$\\
 $208065$ & 4.75E-3 &~2.05  &4.01E-1  &$~1.00$ &2.62E-2   &$~2.00$&6.65E-2   &$~2.32$\\
\hline
\end{tabular}
\end{table}

\subsection{Numerical Experiments of the fully nonlinear Monge--Amp\`{e}re equations }

We shall next test the performance of the proposed numerical scheme \eqref{eqn:weakform2} for solving the fully nonlinear Monge-Amp\`{e}re equation. We will study the impact of the penalty term on stability and accuracy in Examples~\ref{ex:problem 7} and \ref{ex:problem 8}. In all examples, we denote $K$ the number of iterations and $T$ the CPU time cost. We use $\|u^{k+1}-u^{k}\|_2\leq 10^{-8}$ as the stopping criteria of Newton iterations.

\begin{ex}\label{ex:problem 5} A  problem with an exact radial solution.
\end{ex}
In this test, we solve problem \eqref{eqn:para6} on the unit square $\Omega=(0,1)^2$ with the data
$$f=(1+x^2+y^2)e^{(x^2+y^2)/2},\quad g=e^{(x^2+y^2)/2}.$$
This example problem is found in \cite{Benamou, Froese3}. The exact solution of this problem is $u = e^{(x^2+y^2)/2}$.

We take $\sigma=10$ in \eqref{eqn:weakform2} and the numerical results are collected in the Table~6.22. It shows clearly that the proposed numerical method converges with optimal orders of $\mathcal{O}(h^2)$, $\mathcal{O}(h)$, and $\mathcal{O}(h)$ in the $L^2$ norm, $H^1$ seminorm, and $H^2$ seminorm, respectively. The recovered numerical gradient converges with a superconvergence order of $\mathcal{O}(h^2)$ as expected.

We study also the effects of the presentation of high-frequency sinusoidal noise to the data (i.e. the source $f$ and the boundary conditions $g$). Numerical results are shown in Table~6.23. The proposed method yields a solution that is convex except at the boundary although the noisy data is not convex; cf. Figure~6.2. The numerical results show that the noise does not have any effect on the rate of convergence for the presented method.

\begin{table}[h!]
\centering
\small{\caption{\emph{Example~\ref{ex:problem 5}--Numerical results of the scheme \eqref{eqn:weakform2}}}}
\label{Tab:Eg5a}
\begin{tabular}{lllllllllll}
\hline
 $~1/h $&K    &T(s)  &~~$\|e\|_0$   &order  &~~$|e|_1$  &order  &~~$|e|_{1,r}$&order  &~~$|e|_2$  &order\\
\hline
 $~~8 $&5    &0.08  &  6.21E-3 &~     &1.44E-1 &$~   $ &3.08E-2 &$~   $&3.28E-1 &$~    $\\
 $~~16$&5    &0.22  &  1.68E-3 &1.88  &7.12E-2 &$1.02$ &8.10E-3 &$1.93$&1.56E-1 &$~1.08$\\
 $~~32$&5    &0.74  &  4.40E-4 &1.94  &3.55E-2 &$1.01$ &2.04E-3 &$1.99$&7.60E-2 &$~1.04$\\
 $~~64$&5    &2.96  &  1.12E-5 &1.97  &1.77E-2 &$1.00$ &5.11E-4 &$2.00$&3.76E-2 &$~1.01$\\
 $~128$&5    &12.4  &  2.84E-5 &1.99  &8.86E-3 &$1.00$ &1.27E-4 &$2.00$&1.88E-2 &$~1.01$\\
 $~256$&6    &64.1  &  7.12E-6 &1.99  &4.43E-3 &$1.00$ &3.18E-5 &$2.00$&9.36E-3 &$~1.00$\\
\hline
\end{tabular}
\end{table}

\begin{table}[h!]
\centering
\small{\caption{\emph{Example~\ref{ex:problem 5}--Numerical results of the scheme \eqref{eqn:weakform2} with noisy data}}}
\label{Tab:Eg5b}
\begin{tabular}{lllllllllll}
\hline
$~1/h $ &K&T(s)&$~~\|e\|_0$   &order  &~~$|e|_1$  &order  &~~$|e|_{1,r}$&order  &~~$|e|_2$  &order\\
\hline
 $~~~8 $&9&0.09& 3.94E-2 &~     &5.45E-1 &$~   $ &2.18E-1 &$~   $&1.12E-1 &$~    $\\
 $~~16$&7&0.24& 1.11E-2 &1.84  &2.55E-1 &$1.10$ &5.36E-2 &$2.02$&5.37E-1 &$~1.06$\\
 $~~32$&6&0.71& 3.69E-3 &1.59  &1.21E-1 &$1.08$ &1.20E-2 &$2.15$&2.55E-1 &$~1.07$\\
 $~~64$&5&2.92& 1.06E-3 &1.81  &5.94E-2 &$1.03$ &2.83E-3 &$2.09$&1.24E-1 &$~1.05$\\
 $~128$&5&12.2& 2.78E-4 &1.93  &2.96E-2 &$1.01$ &6.86E-4 &$2.04$&6.09E-2 &$~1.01$\\
 $~256$&5&57.1& 7.09E-5 &1.97  &1.48E-2 &$1.00$ &1.69E-4 &$2.02$&3.03E-2 &$~1.00$\\
\hline
\end{tabular}
\end{table}


\begin{figure}[!h]
    \centering
    {\includegraphics[width=0.35\textwidth]{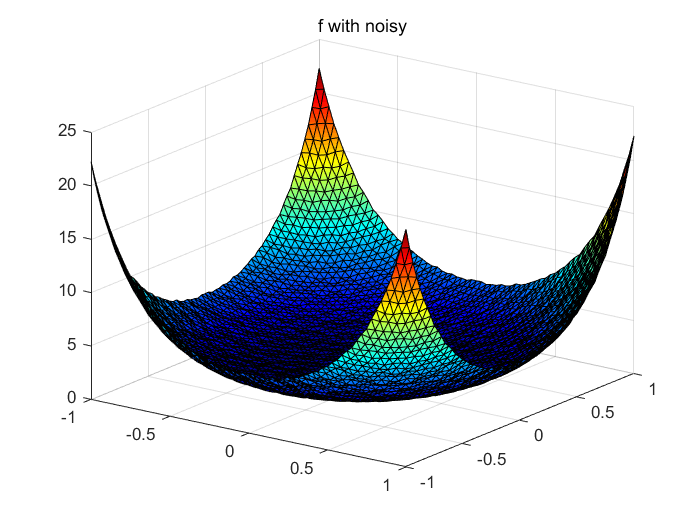}}
    {\includegraphics[width=0.35\textwidth]{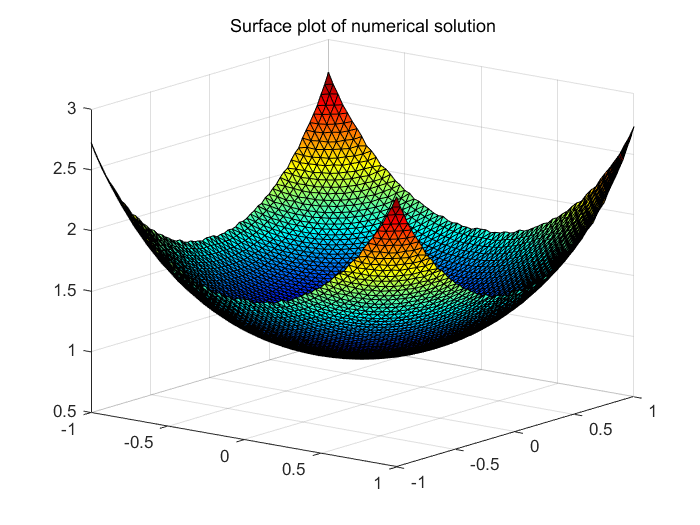}}
\small{\caption{\emph{Example~\ref{ex:problem 5} -- Numerical results with noisy data on a 64$\times$64 grid}}}
\end{figure}

\begin{ex}\label{ex:problem 6} A problem with blow-up at boundary.
\end{ex}
In this test, we choose data such that the exact solution to problem \eqref{eqn:para6} is $u=(x^2+y^2)^{5/3}$. As shown in Example~\ref{ex:problem 3}, $u\in H^{8/3-\tau}$ for arbitrarily small $\tau>0$. The source function $f$ blows up at the boundary.

We take $\Omega=(0,1)^2$ and set $\sigma=2$. The numerical results are illustrated in Table~6.24. Numerical results suggest convergence orders of $\mathcal{O}(h^2)$, $\mathcal{O}(h)$, $\mathcal{O}(h^{2/3})$ in the $L^2$ norm, $H^1$ seminorm, and $H^2$ seminorm, respectively. While the recovered numerical gradient converges with a superconvergence order of $5/3$.
Figure~6.3 shows the profiles of the approximated solution with $\sigma=2$ on a 64$\times$64 grid.

\begin{table}[ht]
\centering
\small{\caption{\emph{Example~\ref{ex:problem 6} -- Numerical results of the scheme \eqref{eqn:weakform2}}}}
\label{Tab:Eg6}
\begin{tabular}{lllllllllll}
\hline
$~1/h $ &K&~~$\|e\|_0$   &order  &~~$|e|_1$  &order  &~~$|e|_{1,r}$&order  &~~$|e|_2$  &order\\
\hline
 $~~8$&6 & 4.15E-3 &~     &8.34E-2 &$~   $ &1.13E-2 &$~   $&2.17E-1 &$~    $\\
 $~16$&6 & 1.10E-3 &1.91  &4.27E-2 &$0.97$ &3.62E-3 &$1.64$&1.37E-2 &$~0.68$\\
 $~32$&7 & 2.78E-4 &1.99  &2.19E-2 &$0.97$ &1.16E-3 &$1.64$&8.66E-2 &$~0.66$\\
 $~64$&7 & 6.52E-5 &2.09  &1.09E-2 &$1.00$ &3.74E-4 &$1.63$&5.52E-2 &$~0.65$\\
 $128$&7 & 1.32E-5 &2.31  &5.22E-3 &$1.06$ &1.25E-4 &$1.60$&3.52E-2 &$~0.65$\\
 $256$&8 & 3.14E-6 &2.07  &2.47E-3 &$1.08$ &4.12E-5 &$1.60$&2.25E-2 &$~0.65$\\
\hline
\end{tabular}
\end{table}

\begin{figure}[!h]
    \centering
    {\includegraphics[width=0.35\textwidth]{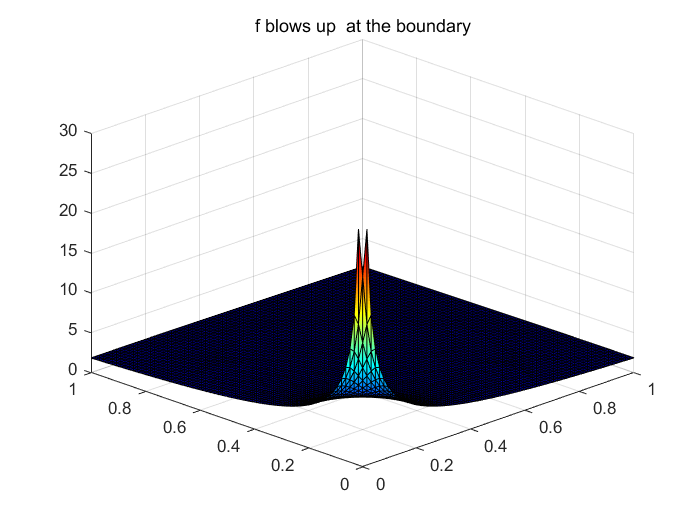}}
    {\includegraphics[width=0.35\textwidth]{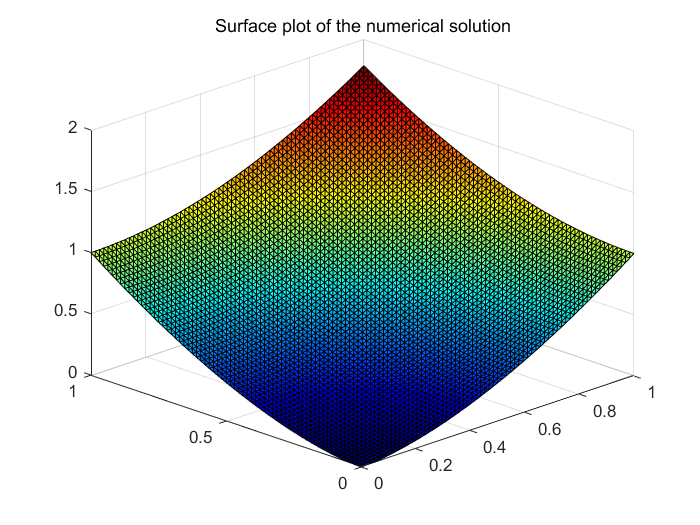}}
\small{\caption{\emph{Example~\ref{ex:problem 6} -- Numerical results with $\sigma=2$ on a 64$\times$64 grid}}}
\end{figure}

\begin{ex}\label{ex:problem 7} Problem in \cite[Example 3]{Chen}.
\end{ex}
In this test, we consider \eqref{eqn:para6} on $\Omega= (-0.5,0.5)^2$ with the data
$$f(x,y)=\text{max}(1-\frac{0.1}{\sqrt{x^2+y^2}},0), \quad g(x,y)=\frac{1}{2}(\sqrt{x^2+y^2}-0.1)^2.$$
The exact solution is given by $u=\frac{1}{2}\text{max}(\sqrt{x^2+y^2}-0.1,0)^2$. The singularity appears along the circle $x^2+y^2=0.1^2$.

We set $\sigma=2$ and $\sigma=0$ in the numerical scheme \eqref{eqn:weakform2}, which represent the cases with and without penalty, respectively. Numerical results are shown in Tables~6.25 and 6.26. It is observed that the absence of penalty leads to unsteady results. Compared with the penalty method, the penalty-free method requires more iterations to converge to the exact solution and has a worse convergence rate.

\begin{table}[h!]
\centering
\small{\caption{\emph{Example~\ref{ex:problem 7} -- Numerical results of the numerical scheme \eqref{eqn:weakform2} with penalty}}}
\label{Tab:Eg7a}
\begin{tabular}{lllllllllll}
\hline
$~1/h $ &K&~~$\|e\|_0$   &order  &~~$|e|_1$  &order  &~~$|e|_{1,r}$&order  &~~$|e|_2$  &order\\
\hline
 $~~8$&~9 & 2.13E-3 &~     &4.42E-2 &$~   $ &1.28E-2 &$~   $&1.84E-1 &$~    $\\
 $~16$&~8 & 6.23E-3 &1.77  &2.24E-2 &$0.98$ &7.19E-3 &$0.83$&1.39E-1 &$~0.40$\\
 $~32$&10 & 3.25E-4 &0.94  &1.15E-2 &$0.96$ &4.22E-3 &$0.77$&1.02E-1 &$~0.46$\\
 $~64$&11 & 1.87E-4 &0.80  &5.94E-3 &$0.96$ &2.44E-3 &$0.79$&7.71E-2 &$~0.39$\\
 $128$&30 & 6.62E-4 &1.49  &2.91E-3 &$1.03$ &1.01E-3 &$1.26$&5.84E-2 &$~0.40$\\
 $256$&17 & 1.88E-5 &1.81  &1.48E-3 &$0.98$ &5.56E-4 &$0.86$&4.95E-2 &$~0.24$\\
\hline
\end{tabular}
\end{table}

\begin{table}[h!]
\centering
\small{\caption{\emph{Example~\ref{ex:problem 7} -- Numerical results of the numerical scheme \eqref{eqn:weakform2} without penalty}}}
\label{Tab:Eg7b}
\begin{tabular}{lllllllllll}
\hline
$~~n $ &~K&~~$\|e\|_0$   &order  &~~$|e|_1$  &order  &~~$|e|_{1,r}$&order  &~~$|e|_2$  &order\\
\hline
 $~~8$&~~8 & 2.46E-3 &~      &4.55E-2 &$~   $ &8.45E-3 &$~   $&1.67E-1 &$~    $\\
 $~16$&~~8 & 5.65E-4 &~2.12   &2.21E-2 &$1.04$ &4.06E-3 &$1.06$&1.17E-1 &$~~0.52$\\
 $~32$&~13 & 1.66E-4 &~1.77   &1.12E-2 &$0.98$ &2.40E-3 &$0.76$&7.94E-2 &$~~0.56$\\
 $~64$&~19 & 8.94E-5 &~0.89   &5.66E-3 &$0.98$ &1.48E-3 &$0.70$&5.98E-2 &$~~0.41$\\
 $128$&124 & 9.11E-5 &-0.03  &3.48E-3 &$0.70$ &1.45E-3 &$0.03$ &7.01E-2 &$-0.23$\\
 $256$&~73 & 8.71E-5 &~0.06   &2.07E-3 &$0.75$ &1.38E-3 &$0.07$&6.74E-2 &$~~0.06$\\
\hline
\end{tabular}
\end{table}

\begin{ex}\label{ex:problem 8} Problem whose solution is a cone.
\end{ex}
In the last test, we choose the data such that the exact solution is a cone.
\begin{equation*}
  u=\sqrt{(x-0.5)^2+(y-0.5)^2}, \quad f=\pi \delta_{(0.5,0.5)}.
\end{equation*}
Following a similar strategy as in \cite{Benamou, Chen, Froese2, Neilan1}, we replace $f$ by its regularized discrete version:
\begin{eqnarray*}
f_h=\left \{
\begin{array}{lll}
~~\pi/(4h^2) \quad \mbox{if}~~|x-0.5|<h~~\mbox{and}~~|y-0.5|<h,\\
~~~~~\quad 0   ~~~~~~\mbox{otherwise} .
\end{array}
\right.
\end{eqnarray*}

We find that the absence of penalty leads to divergent Newton iterations. For example, when we take $\sigma=0$ and $n=64$, the numerical solution does not converge after 500 iterations. But the numerical scheme with penalty reaches the stopping tolerance after 64 iterations. The surface plots of the numerical solution and absolute error are demonstrated in Figure~6.4.


\begin{figure}[!h]
    \centering
    {\includegraphics[width=0.35\textwidth]{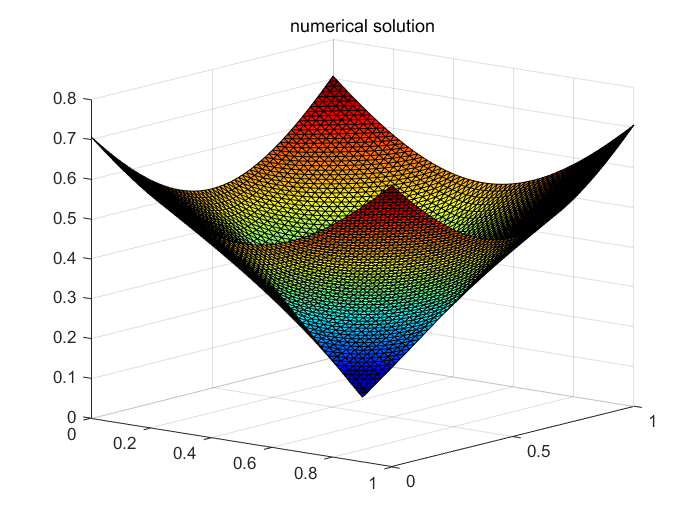}}
    {\includegraphics[width=0.35\textwidth]{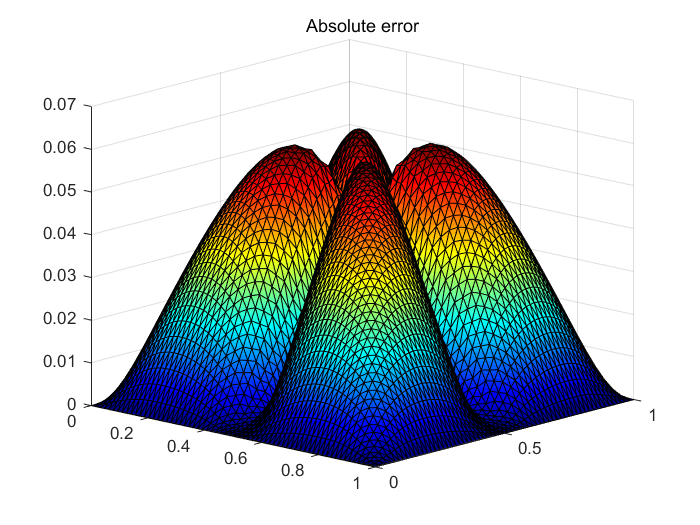}}
\small{\caption{\emph{Example~\ref{ex:problem 8} -- Numerical results with $\sigma=2$ on a 64$\times$64 grid}}}
\end{figure}

\section{Concluding remarks}
In this paper, we present GRBL and HRBL FEMs for second order elliptic equations in non-divergence form. By utilizing the gradient and Hessian recovery operators, we discrete the least square variation in the simplest linear element space. For the GRBL FEM, we prove the stability by adding a rotation. The optimal order of the convergence error is theoretically shown under a discrete $H^2$ seminorm. When coefficients are constants, optimal error estimates in $L^2$ norm and $H^1$ seminorm have also been proven. For the HRBL FEM, optimal convergence in $L^2$ and $H^1$ norms and stability are confirmed from a series of benchmark tests. Finally, the GRBL FEM has been applied to solve the fully nonlinear Monge-Amp\`{e}re equation. Numerical results verify the robustness and the optimal-order convergence.


\bibliographystyle{siam}

\begin{thebibliography}{10}
\bibitem{Adama}
R. Adamas, Sobolev Space. Academic Press, New York (1975).

\bibitem{Ainsworth}
M. Ainsworth and J. T. Oden, A Posteriori Error Estimation in Finite Element Analysis. Wiley Interscience, New York (2000).

\bibitem{Bank1}
R. E. Bank and A. Weiser, Some a posteriori error estimators for elliptic partial differential equations. Math. Comp. 44, 283-301 (1985).

\bibitem{Bank2}
R. E. Bank and J. Xu, Asymptotically exact a posteriori error estimators, Part I: Grid with superconvergence. SIAM J. Numer. Anal. 41, 2294-2312 (2003).

\bibitem{Benamou}
J. D. Benamou, B. D. Froese and A. M. Oberman, Two numerical methods for the elliptic Monge-Amp\`{e}re equation. M2AN Math. Model. Numer. Anal. 44 (4) (2010) 737-758.

\bibitem{Blechschmidt2019}
J. Blechschmidt, R. Herzog, and M. Winkler. Error estimation for second-order pdes in nonvariational form. arXiv preprint arXiv:1909.12676, 2019.

\bibitem{BRW2020}
J. Blechschmidt, R. Herzog and M. Winkler, Error estimation for second-order partial differential equations in nonvariational form. Numer. Methods Partial Differential Equations. 2020. https://doi.org/10.1002/num.22678.

\bibitem{Brenner}
S. C. Brenner, T. Gudi, M. Neilan and L. Y. Sung, A $C^0$ penalty method for the fully nonlinear Monge-Amp\`{e}re equation. Math. Comp. 80 (2011) 1979-1995.

\bibitem{Brenner2020}
S. C. Brenner and E. L. Kawecki, Adaptive $C^0$ interior penalty methods for Hamilton-Jacobi-Bellman equations with Cordes coefficients. J. Comput. Appl. Math. to appear, 2020.

\bibitem{Chen}
Y. G. Chen, J. W. L. Wan and J. Lin, Monotone Mixed Finite Difference Scheme for Monge-Amp\`{e}re equation. J. Sci. Comput. 76 (3) (2018) 1839-1867.

\bibitem{Cordes1956}
H. O. Cordes, \"{U}ber die erste Randwertaufgabe bei quasilinearen Differentialgleichungen zweiter Ordnung in mehr als zwei Variablen. Math. Ann., 131:278-312, 1956.

\bibitem{Costabel}
M. Costabel and M. Dauge, Maxwell and Lam\'{e} eigenvalues on polyhedra. Math. Methods. Appl. Sci. 22 (1999), 243-258.

\bibitem{Dean}
E. J. Dean and R. Glowinski, Numerical solution of the two-dimensional elliptic Monge-Amp\`{e}re equation with Dirichlet boundary conditions: An augmented Lagrangian approach, C. R. Math. Acad. Sci. Paris, 336(2003), 779-784.

\bibitem{Dedner}
A. Dedner and T. Pryer, Discontinuous Galerkin methods for non variational problems. arXiv:1304.2265v1.

\bibitem{Evans}
L. Evans, Partial Differential Equations. American Mathematical Society, Providence, Rhode Island (1998).

\bibitem{Feng}
X. Feng, L. Hennings and M. Neilan, Finite element methods for second order linear elliptic partial differential equations in non-divergence form. Math. Comp. 86 (2017), 2025-2051.

\bibitem{FN2008}
X. Feng and M. Neilan, Vanishing moment method and moment solutions for fully nonlineear second order partial differential equations, J. Sci. Comput. 38 (2008), 74-98.

\bibitem{Feng2018}
X. Feng, M. Neilan, and S. Schnake, Interior penalty discontinuous Galerkin methods for second order linear non-divergence form elliptic PDEs, J. Sci. Comput. 74 (2018), 1651-1676.

\bibitem{Fleming}
W. H. Fleming and H. M. Soner, Controlled Markov Processes and Viscosity Solutions. 2nd ed., Stoch. Model. Appl. Probab. 25, Springer, New York, 2006.

\bibitem{Froese2}
B. D. Froese and A.M. Oberman, Convergent difference schemes for viscosity solutions of the elliptic Monge-Amp\`{e}re equation in dimensions two and higher. SIAM J. Numer. Anal. 49 (4) (2011) 1692-1714.

\bibitem{Froese3}
B. D. Froese and A.M. Oberman, Convergent filtered schemes for the Monge--Amp\`{e}re partial differential equation. SIAM J. Numer. Anal. 51 (1) (2013) 423-444.

\bibitem{Gallistl}
D. Gallistl, Variational Formulation and Numerical Analysis of Linear Elliptic Equations in Nondivergence form with Cord\`{e}s Coefficients. SIAM J. Numer. Anal. 55(2017), 737-757.

\bibitem{Gallist2018}
D. Gallistl, Numerical approximation of planar oblique derivative problems in nondivergence form. Math. Comp, 2018.

\bibitem{Guo}
H. Guo and X. Yang, Polynomial preserving recovery for high frequency wave propagation. J. Sci. Comput. 71(2017), 594-614.

\bibitem{GuoZhang}
H. Guo, Z. Zhang and R. Zhao, Hessian recovery for finite element methods, Math. Comput. 86 (2017) 1671-1692.

\bibitem{GZZZ2016}
H. Guo, Z. Zhang, R. Zhao, and Q. Zou, Polynomial preserving recovery on boundary. J. Comput. Allp. Math. 307 (2016), 119-133.

\bibitem{Guo2017}
H. Guo, Z. Zhang and Q. Zou, A $C^0$ Linear Finite Element Method for Biharmonic Problems. J. Sci. Comp. 74(3) (2018), 1397-1422.

\bibitem{refJiaXu}
Y. Jia, M. Xu, Y. Lin, and D. Jiang, An efficient technique based on least-squares method for fractional integro-differential equations, Alex. Eng. J., (2022), https://doi.org/10.1016/j.aej.2022.08.033.

\bibitem{Kawecki20191}
E. Kawecki, A DGFEM for nondivergence form elliptic equations with Cordes coefcients on curved domains. Numer. Meth. Part. D. E. 35(5)(2019)1717-1744.

\bibitem{Kawecki20192}
E. Kawecki, A discontinuous Galerkin finite element method for uniformly elliptic two dimensional oblique boundary-value problems. SIAM J. Numer. Anal. 57(2)(2019):751-778,

\bibitem{Kawecki20193}
E. Kawecki, O. Lakkis, and T. Pryer. A finite element method for the Monge--Amp\`{e}re equation with transport boundary conditions. arXiv preprint arXiv:1807.03535, 2018.

\bibitem{Kawecki20194}
E. Kawecki and I. Smears, Convergence of adaptive discontinuous Galerkin and $C^0$-interior penalty finite element methods for Hamilton--Jacobi--Bellman and Isaacs equations. 2020. arXiv:2006.07215.

\bibitem{Kawecki20195}
E. Kawecki and I. Smears, Unified analysis of discontinuous galerkin and $C^0$-interior penalty finite element methods for Hamilton-Jacobi-Bellman and Isaacs equations. 2020. arXiv:2006.07202.

\bibitem{Lakkis2019}
O. Lakkis and A. Mousavi, A least-squares galerkin approach to gradient and Hessian recovery for nondivergence-form elliptic equations. 2019. arXiv preprint arXiv:1909.00491.

\bibitem{Lakkisand}
O. Lakkis and T. Pryer, A finite element method for second order nonvariational ellipitic problems. SIAM J. Sci. Comput. 33(2) (2011), 786-801.

\bibitem{Lakkis}
O. Lakkis and T. Pryer, A finite element method for nonlinear elliptic problems. SIAM J. Sci. Comput. 35(4)(2013), 2025-2045.

\bibitem{Loeper}
G. Loeper and F. Rapetti, Numerical solution of the Monge-Amp\`{e}re equation by a Newton's algorithm. C. R. Math. Acad. Sci. Paris, 340(4) (2005b)319-324.

\bibitem{Mu}
L. Mu and X. Ye, A simple finite element method for non-divergence form elliptic equations, Int. J. Numer. Anal. Mod. 14(2) (2017), 306-311.

\bibitem{Naga}
A. Naga and Z. Zhang, The polynomial-preserving recovery for higher order finite element methods in 2D and 3D. Discret. Contin. Dyn. Syst.-Ser. B 5-3 (2005), 769-798.

\bibitem{Neilan1}
M. Neilan, Finite element methods for fully nonlinear second order PDEs based on a discrete Hessian with applications to the Monge-Amp\`{e}re equation. J. Compt. Appl. Math. 263 (2014) 351-369.

\bibitem{Neilan2}
M. Neilan, Quadratic finite element methods for the Monge-Amp\`{e}re equation. J. Sci. Comput. 54(1) (2013) 200-226.

\bibitem{Neilan3}
M. Neilan and M. Wu, Discrete miranda-talenti estimates and applications to linear and nonlinear pdes. J. Compt. Appl. Math. 356 (2019) 358-376.

\bibitem{Nochetto}
R. H. Nochetto and W. Zhang, Discrete ABP estimate and convergence rates for linear elliptic equations in non-divergence form. Found. Comput. Math. 18(3) (2018), 537-593.


\bibitem{Pryer}
T. Pryer, Recovery Methods for Evolution and Nonlinear Problems. D.Phil. thesis, 2010, University of Sussex.

\bibitem{Smears}
I. Smears and E. S\"{u}li, Discontinuous Galerkin finite element approximation of nondivergence form elliptic equations with Cord\`{e}s coefficients. SIAM J. Numer. Anal. 51, (2013), 2088-2106.

\bibitem{Smears1}
I. Smears and E. S\"{u}li. Discontinuous Galerkin finite element approximation of Hamilton-Jacobi-Bellman equations with Cord\`{e}s coefficients SIAM J. Numer. Anal. 52(2) (2014), 993-1016.

\bibitem{Wang}
C. Wang and J. Wang, A primal-dual weak Galerkin finite element method for second order elliptic equations in non-divergence form. Math. Comp. 87 (2018), 515-545.

\bibitem{XuGuoZou}
M. Xu, H. Guo and Q. Zou
, Hessian recovery based finite element methods for the Two-Dimensional Cahn-Hilliard
Equation, J. Comput. Phys., 386 (2019),  524-540.

\bibitem{refXZE3}
M. Xu, L. Zhang, and E. Tohidi, A fourth-order least-squares based reproducing kernel method for one-dimensional elliptic interface problems, Appl. Numer. Math., 162 (2021) 124-136.

\bibitem{refXZE4}
M. Xu, L. Zhang, and E. Tohidi, An efficient method based on least-squares technique for interface problems, Appl. Math. Lett. (2022), https://doi.org/10.1016/j.aml.2022.108475.


\bibitem{Zienkiewicz}
O. C. Zienkiewicz and J. Z. Zhu, The superconvergence patch recovery and a posteriori error estimates part 1: the recovery technique. Int. J. Numer. Methods. Eng. 33(1992), 1331-1364.

\bibitem{Zhang}
Z. Zhang and A. Naga, A new finite element gradient recovery method: superconvergence property. SIAM J. Sci. Comput. 26-4(2005),1192-1213.

\bibitem{Zhu2020}
P. Zhu and X. Wang. A Least Square Based Weak Galerkin Finite Element Method for Second Order Elliptic Equations in Non-Divergence Form. Acta Math. Sci. Ser. B (Engl. Ed.),
40(5) (2020), 1553-1562.

\end{thebibliography}

\end{document}